\numberwithin{equation}{section}
\theoremstyle{plain}
\newtheorem{thm}{Theorem}[section]
\newtheorem*{thm*}{Theorem}
\newtheorem{lem}[thm]{Lemma}
\newtheorem*{lem*}{Lemma}
\newtheorem*{prop*}{Proposition}
\newtheorem{prop}[thm]{Proposition}
\newtheorem{cor}[thm]{Corollary}
\theoremstyle{definition}
\newtheorem{defin}[thm]{Definition}
\newtheorem{rem}[thm]{Remark}
\newtheorem{construction}[thm]{Construction}
\renewcommand\subsubsection{\@startsection {subsubsection}{1}{\z@}%
	{-3.5ex \@plus -1ex \@minus -.2ex}%
	{-1em}%
	{\normalfont\large\underline}}
\newcommand{\B}{\textup{B}}
\newcommand{\Z}{\mathbb Z}
\newcommand{\tK}{\textup{K}}
\newcommand{\Sp}{\mathrm{Sp}}
\newcommand{\Equiv}{\mathcal{E}\textup{quiv}}
\newcommand{\Map}{\textup{Map}}
\newcommand{\act}{\textup{act}}
\newcommand{\cc}{\mathcal C}
\newcommand{\cD}{\mathcal D}
\newcommand{\cS}{\mathcal S}
\newcommand{\Gm}{\mathbb G_{\mathrm{m}}}
\newcommand{\id}{\textup{id}}
\newcommand{\QCoh}{\textup{QCoh}}
\newcommand{\BGm}{\textup{B}\Gm}
\renewcommand{\Pr}{\mathcal P\textup{r}}
\newcommand{\Triv}{\mathcal{T}\textup{riv}}
\newcommand{\LMod}{\textup{LMod}}
\newcommand{\tH}{\textup{H}}
\newcommand{\cG}{\mathcal G}
\newcommand{\Vect}{\textup{Vect}}
\newcommand{\Cat}{\textup{Cat}}
\newcommand{\cE}{\mathcal{E}}
\newcommand{\cO}{{\mathcal O}}
\newcommand{\cF}{{\mathcal F}}
\newcommand{\cEnd}{{\mathcal{E}\textup{nd}}}
\newcommand{\cM}{\mathcal M}
\newcommand{\cH}{\mathcal H}
\newcommand{\cL}{\mathcal L}
\newcommand{\cQCoh}{\mathcal{QC}\textup{oh}}
\newcommand{\Sch}{\textup{Sch}}
\newcommand{\Br}{\mathsf{Br}}
\newcommand{\cBr}{\mathcal{B}\textup{r}}
\newcommand{\Bre}{\mathsf{Br}^\dagger}
\newcommand{\cBre}{\mathcal{B}\textup{r}^\dagger}
\newcommand{\Ger}{\textup{Ger}}
\newcommand{\cPic}{{\mathcal P\textup{ic}}}
\newcommand{\Pic}{\textup{Pic}}
\newcommand{\cI}{\mathcal I}
\newcommand{\pr}{\textup{pr}}
\newcommand{\Mod}{\textup{Mod}}
\newcommand{\ett}{{\textup{\'et}}}
\newcommand{\Stk}{{\mathcal S\textup{tk}}}
\newcommand{\GL}{{\textup{GL}}}
\newcommand{\PGL}{{\textup{PGL}}}
\newcommand{\Lin}{\mathcal L\textup{in}\mathcal{C}\textup{at}^{\textup{St}}}
\newcommand{\Linpre}{{\mathcal L\textup{in}\mathcal{C}\textup{at}^{\textup{PSt}}}}
\newcommand{\QStk}{\mathcal Q\textup{Stk}^\textup{St}}
\newcommand{\QStkpre}{\mathcal Q\textup{Stk}^\textup{PSt}}
\newcommand{\Deraz}{\mathcal D\textup{eraz}}
\newcommand{\AbGer}{\textup{AbGer}}
\newcommand{\Band}{\textup{Band}}
\newcommand{\AbGr}{\textup{AbGr}}
\newcommand{\Fin}{\textup{Fin}}
\newcommand{\Groth}{\cG\textup{roth}}
\newcommand{\Spec}{\textup{Spec}}
\newcommand{\cY}{\mathcal Y}
\begin{document}
	\setlength{\headheight}{14.49998pt}
	\title{The derived Brauer map via twisted sheaves}
	
	\author{Guglielmo Nocera\footnote{LAGA, Universit\'e Paris 13, Av. J.-B. Cl\'ement 99, 93430 Villetaneuse. \texttt{guglielmo.nocera-at-gmail.com}}\hspace{0.2cm} and Michele Pernice\footnote{KTH, Royal Institute of Technology, Brinellvägen 8, 114 28 Stockholm. email: \texttt{mpernice-at-kth.se}}}

	\date{\today}

	\maketitle

	\abstract{Let $X$ be a quasicompact quasiseparated scheme. The collection of derived Azumaya algebras in the sense of To\"en forms a group, which contains the classical Brauer group of $X$ and which we call $\Bre(X)$ following Lurie. To\"en introduced a map $\phi:\Bre(X)\to \tH^2_\ett(X,\Gm)$ which extends the classical Brauer map, but instead of being injective, it is surjective. In this paper we study the restriction of $\phi$ to a subgroup $\Br(X)\subset\Bre(X)$, which we call the \textit{derived Brauer group}, on which $\phi$ becomes an isomorphism $\Br(X)\simeq \tH^2_\ett(X,\Gm)$. This map may be interpreted as a derived version of the classical Brauer map which offers a way to ``fill the gap'' between the classical Brauer group and the cohomogical Brauer group. The group $\Br(X)$ was introduced by Lurie by making use of the theory of prestable $\infty$-categories. There, the mentioned isomorphism of abelian groups was deduced from an equivalence of $\infty$-categories between the \textit{Brauer space} of invertible presentable prestable $\cO_X$-linear categories, and the space $\Map(X,\tK(\Gm,2))$. We offer an alternative proof of this equivalence of $\infty$-categories, characterizing the functor from the left to the right via gerbes of connective trivializations, and its inverse via connective twisted sheaves. We also prove that this equivalence carries a symmetric monoidal structure, thus proving a conjecture of Binda an Porta.}
	
	
	\tableofcontents

	\subsection*{Acknowledgments}We would like to thank Mauro Porta who suggested the topic to us and provided many enlightening comments. Our thanks also go to Federico Barbacovi, Federico Binda, Jacob Lurie, David Rydh and Angelo Vistoli for fruitful discussions carried out with them on this matter. We also thank the referee, whose comments helped in making the paper more readable.
 
 During the months of April and May 2022, Guglielmo Nocera was hosted at MIT by Roman Bezrukavnikov.

	\section{Introduction}\label{section-history}
	Throughout the whole work, $X$ will be a quasicompact quasiseparated scheme over some field $k$ of arbitrary characteristic. The categories of sheaves and the various cohomology groups will always be understood with respect to the \'etale topology.
	
	In 1966, Grothendieck \cite{Grothendieck-Brauer-I} introduced the notion of \textit{Azumaya algebra} over $X$: this is an \'etale sheaf of algebras which is locally of the form $\cEnd(\cE)$, the sheaf of endomorphisms of a vector bundle $\cE$ over $X$. This is indeed a notion of ``local triviality'' in the sense of \textit{Morita theory}: two sheaves of algebras $A,A'$ are said to be \textit{Morita equivalent} if the category $$\LMod_{A}=\{\cF\textup{ quasicoherent sheaf over }X\textup{ together with a left action of }A\}$$ and its counterpart $\LMod_{A'}$ are (abstractly) equivalent: for instance, one can prove that, for any vector bundle $\cE$ over $X$, $\LMod_{\cEnd(\cE)}$ is equivalent to $\LMod_{\cO_X}=\QCoh(X)$ via the functor $M \mapsto \cE^{\vee}\otimes_{\cEnd(\cE)}M$.

	The classical \textit{Brauer group} $\textup{Br}_{\textup{Az}}(X)$ of $X$ is the set of Azumaya algebras up to Morita equivalence, with the operation of tensor product of sheaves of algebras. Grothendieck showed that this group injects into $\tH^2(X,\Gm)$ by using cohomological arguments: essentially, he used the fact that a vector bundle corresponds to a $\GL_n$-torsor for some $n$, and that there exists a short exact sequence of groups
	$$1\to \Gm\to \GL_n\to \PGL_n\to 1.$$
	The image of $\textup{Br}_{\textup{Az}}(X)\hookrightarrow\tH^2(X,\Gm)$ is contained in the torsion subgroup of $\tH^2(X,\Gm)$, which is often called the \textit{cohomological Brauer group} of $X$.\footnote{Other authors, however, use this name for the whole $\tH^2(X,\Gm)$.}
	
	One of the developments of Grothendieck's approach to the study of the Brauer group is due to Bertrand To\"en and its use of derived algebraic geometry in \cite{Toen-Azumaya}. There, he introduced the notion of \textit{derived Azumaya algebra} as a natural generalization of the usual notion of Azumaya algebra. Derived Azumaya algebras over $X$ form a dg-category $\Deraz_X$. There is a functor $\Deraz_X\to \mathbb{D}g^c(X)$, this latter being (in To\"en's notation) the dg-category of presentable stable $\cO_X$-linear dg-categories\footnote{See \cref{section-prestable} for a definition of $\cO_X$-linear categories in the $\infty$-categorical setting.} which are compactly generated, with, as morphisms, functors preserving all colimits. The functor is defined by $$A\mapsto \LMod_A=\{\textup{quasicoherent sheaves on $X$ with a left action of $A$}\}$$
 
    where all terms have now to be understood in a derived sense. One can prove that this functor sends the tensor product of sheaves of algebras to the tensor product of presentable $\cO_X$-linear dg-categories, whose unit is $\QCoh(X)$. Building on classical Morita theory, To\"en defined two derived Azumaya algebras to be Morita equivalent if the dg-categories of left modules are (abstractly) equivalent. This agrees with the fact mentioned above that $\LMod_{\cEnd(\cE)}\simeq \QCoh(X)$ for any $\cE\in \Vect(X)$.
	
	In \cite[Proposition 1.5]{Toen-Azumaya}, To\"en characterized the objects in the essential image of the functor $A\mapsto \LMod_A$ as the compactly generated presentable $\cO_X$-linear dg-categories which are \textit{invertible} with respect to the tensor product, i.e. those $M$ for which there exists another presentable compactly generated $\cO_X$-linear dg-category $M^\vee$ and equivalences $\mathbf 1\xrightarrow{\sim} M\otimes M^\vee$ and $M^\vee\otimes M\xrightarrow{\sim} \mathbf 1.$ The proof of this characterization goes roughly as follows: given a compactly generated invertible dg-category $M$, one can always suppose that $M$ is generated by some single compact generator $\cE_M$. Now, compactly generated presentable $\cO_X$-linear dg-categories satisfy an important \'etale descent property \cite[Theorem 3.7]{Toen-Azumaya}; from this and from \cite[Proposition 3.6]{Toen-Azumaya} one can deduce that the $\cEnd_M(\cE_M)$ has a natural structure of a quasicoherent sheaf of $\cO_X$-algebras $A$, and one can prove that $M\simeq \LMod_A$ as $\cO_X$-linear $\infty$-categories.

	Both Antieau-Gepner \cite{Antieau-Gepner} and Lurie \cite[Chapter 11]{SAG} resumed To\"en's work, using the language of $\infty$-categories in replacement of that of dg-categories. Lurie also generalized the notion of Azumaya algebra and Brauer group to spectral algebraic spaces, see \cite[Section 11.5.3]{SAG}\footnote{One should always keep in mind that a spectral algebraic space, although very general and derived in nature, is by definition a \textit{connective} spectral Deligne-Mumford stack, see \cite[Definition 1.6.8.1, Definition 1.4.4.2]{HA}.}. He considers the $\infty$-groupoid of compactly generated presentable $\cO_X$-linear $\infty$-categories which are invertible with respect to the Lurie tensor product $\otimes$, and calls it the \textit{extended Brauer space} $\cBre_X$. This terminology is motivated by the fact that the set $\pi_0\cBre_X$ has a natural abelian group structure, and by \cite[Corollary 2.12]{Toen-Azumaya} is isomorphic to $\tH^2_{\ett}(X,\Gm)\times \tH^1_{\ett}(X,\Z)$: in particular, it contains the cohomological Brauer group of $X$. At the categorical level, Lurie proves that there is an equivalence of $\infty$-groupoids between $\cBre(X)$ and $\Map_{\Stk_k}(X,\B^2\Gm\times \B\Z)$ ($\Stk_k$ is the $\infty$-category of stacks over the base field $k$). In particular, $\cBre(X)$ is 2-truncated.
	
	We can summarize the situation in the following chain of functors:
	\begin{equation}\label{equation-Bre}\Deraz_X[\mbox{Morita}^{-1}]^\simeq\xrightarrow{\sim} \cBre(X)\xrightarrow{\sim} \Map_{\Stk_k}(X,\B^2\Gm\times \B\Z) 
	\end{equation}
	where the left term is the maximal $\infty$-groupoid in the localization of the $\infty$-category of derived Azumaya algebras to Morita equivalences. At the level of dg-categories, this chain of equivalences is proven in \cite[Corollary 3.8]{Toen-Azumaya}. At the level of $\infty$-categories, this is the combination of \cite[Proposition 11.5.3.10]{SAG} and \cite[11.5.5.4]{SAG}
	
	Note that, while in the classical case we had an injection $\textup{Br}_{\textup{Az}}(X)\hookrightarrow \tH^2(X,\Gm)$, in the derived setting one has a surjection $\Bre(X):=\pi_0\cBre(X)\twoheadrightarrow \tH^2(X,\Gm)$. If $\tH^1(X,\Z)=0$ (e.g. when $X$ is a normal scheme), then the surjection becomes an isomorphism of abelian groups.
	
	While the first equivalence in \eqref{equation-Bre} is completely explicit in the works of To\"en and Lurie, the second one leaves a couple of questions open:
	\begin{itemize}\item since the space $\Map(X,\B^2\Gm\times \B\Z)$ is the space of pairs $(G, P)$, where $G$ is a $\Gm$-gerbe over $X$ and $P$ is a $\Z$-torsor over $X$, it is natural to ask what are the gerbe and the torsor naturally associated to an element of $\cBre(X)$ according to the above equivalence. This is not explicit in the proofs of To\"en and Lurie, which never mention the words ``gerbe'' and ``torsor'', but rather computes the homotopy group sheaves of a sheaf of spaces $\underline\cBr^\dagger_X$ over $X$ whose global sections are $\cBre(X)$.
		
		\item conversely: given a pair $(G,P)$, what is the $\infty$-category associated to it along the above equivalence? \end{itemize}
	
	The goal of the present paper is to give a partial answer to the two questions above. The reason for the word ``partial'' is that we will neglect the part of the discussion regarding torsors, postponing it to a forthcoming work, and focus only on the relationship between linear $\infty$-categories/derived Azumaya algebras and $\Gm$-gerbes.

In fact, we will work with an intermediate group $\Br(X)$, which we call the \textit{derived Brauer group} of $X$ (see \cref{defin-derived-Brauer}), sitting in a chain of injective maps
$$\textup{Br}_{\textup{Az}}(X)\hookrightarrow \Br(X)\hookrightarrow \Bre(X).$$
The derived Brauer group is always isomorphic to $\tH^2_\ett(X,\Gm)$ (\cref{cohomological-interpretation-brauer}) and the first injection is (up to that identification) exactly the classical Brauer map.

The following is our main theorem. The next two subsections will provide the necessary vocabulary to understand its statement. \cref{outline} provides a sketch of the main geometric ideas behind the proof.
\begin{thm*}[\cref{main-theorem}]
		Let $X$ be a qcqs scheme over a field $k$. Then there is a symmetric monoidal equivalence of $\infty$-groupoids\footnote{Actually, both sides are 2-truncated, see \cref{2-groupoids}.}
		$$\Phi:\cBr(X)^{\otimes}\longleftrightarrow \Ger_{\Gm}(X)^{\mathlarger \star}:\Psi$$
		where
		\begin{itemize}
			\item $\Phi(M)=\Triv_{\geq 0}(M)$
			\item $\Psi(G)=\QCoh_\id(G)_{\geq 0}$.
		\end{itemize}
	\end{thm*}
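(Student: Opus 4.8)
The plan is to exhibit both sides as étale stacks on the small site $X_\ett$ and to construct $\Phi$ and $\Psi$ as mutually inverse morphisms of stacks, checking the inverseness on a trivialising cover. Writing $\mathbf 1=\QCoh(X)_{\geq 0}$ for the unit, for $M\in\cBr(X)$ I define $\Triv_{\geq 0}(M)$ to be the stack $U\mapsto\{\text{$\cO_U$-linear equivalences }M|_U\xrightarrow{\sim}\QCoh(U)_{\geq 0}\}$. The first step is to verify this is a $\Gm$-gerbe. Local nonemptiness is the étale-local triviality of an invertible prestable linear category (the prestable analogue of To\"en's descent, which I take from \cref{section-prestable}); local connectedness holds because two trivialisations differ by an invertible object, and invertible connective sheaves are line bundles, which are locally trivial. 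The band is $\Gm$, since an $\cO_U$-linear automorphism of the identity functor of $\QCoh(U)_{\geq 0}$ is multiplication by a global unit. It is exactly here that connectivity does the essential work: in the stable setting the invertible objects of $\QCoh(U)$ include all shifts $L[n]$ of line bundles, contributing the extra $\tH^1_\ett(X,\Z)$ that enlarges $\cBre(X)$, whereas in the connective category only honest line bundles are invertible, so the band is $\Gm$ on the nose and the class of $\Triv_{\geq 0}(M)$ lives in $\tH^2_\ett(X,\Gm)$.

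Dually, $\Psi(G)=\QCoh_\id(G)_{\geq 0}$ is the connective weight-one summand of quasicoherent sheaves on the gerbe, and it is invertible because étale-locally $G\simeq U\times\B\Gm$, for which the weight-one summand is canonically $\QCoh(U)_{\geq 0}$; its inverse is the weight $(-1)$ summand $\QCoh_{\id^{-1}}(G)_{\geq 0}$. To prove $\Phi$ and $\Psi$ are inverse I construct the unit and counit geometrically. The counit $\Phi\Psi\Rightarrow\id$ uses the universal twisted sheaf: a local section $s\in G(U)$ trivialises $G|_U$, hence produces a weight-one line object and therewith a trivialisation of $\QCoh_\id(G)_{\geq 0}|_U$, assembling into a map of $\Gm$-gerbes $G\to\Triv_{\geq 0}(\QCoh_\id(G)_{\geq 0})$. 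The unit $\id\Rightarrow\Psi\Phi$ uses descent along $\pi:\Triv_{\geq 0}(M)\to X$: by construction $\pi^*M$ is trivialised over the gerbe with inertia acting by the identity character, so $M$ is recovered as the weight-one descent datum, giving $M\simeq\QCoh_\id(\Triv_{\geq 0}(M))_{\geq 0}$. Since $\cBr(-)$ and $\Ger_{\Gm}(-)$ both satisfy étale descent and both transformations are compatible with base change, it suffices to check they are equivalences after passing to a cover trivialising $M$, respectively $G$; there the verification reduces to the tautological identifications $\Phi(\QCoh(U)_{\geq 0})\simeq U\times\B\Gm$ and $\Psi(U\times\B\Gm)\simeq\QCoh(U)_{\geq 0}$ together with the matching of bands.

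For the symmetric monoidal enhancement I would put the structure on $\Psi$ and transport it to $\Phi$ by inverseness. The contracted product is $G\star H=(G\times_X H)/\Gm$ for the antidiagonal action, and under the standing qcqs hypotheses $\QCoh$ satisfies a Künneth equivalence $\QCoh(G\times_X H)\simeq\QCoh(G)\otimes_{\QCoh(X)}\QCoh(H)$. Extracting the weight-one summand on $G\star H$ --- equivalently the bidegree $(1,1)$ summand that descends along the antidiagonal quotient --- yields a natural equivalence $\QCoh_\id(G\star H)_{\geq 0}\simeq\QCoh_\id(G)_{\geq 0}\otimes\QCoh_\id(H)_{\geq 0}$, with the weights simply adding. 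The coherence data for $\Psi$ as a symmetric monoidal functor are then to be inherited from those of $\QCoh(-)$ as a symmetric monoidal functor on stacks, rather than built by hand.

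I expect the construction of the two functors as morphisms of stacks, and of the displayed unit and counit, to be essentially formal once prestable étale descent is available. The genuine obstacle is the last step: assembling the infinitely many higher coherences needed to promote the object-level equivalence $\Psi(G\star H)\simeq\Psi(G)\otimes\Psi(H)$ to a symmetric monoidal functor of $\E_\infty$-groups, compatibly with the equivalence of the previous paragraph --- this is precisely the content of the Binda--Porta conjecture. The technical heart is therefore twofold: controlling the interaction of the weight (character) decomposition with connectivity and with the relative tensor product over $\QCoh(X)$, and checking that the Künneth and descent equivalences are natural enough to organise into the required symmetric monoidal structure.
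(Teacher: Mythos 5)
Your overall architecture matches the paper's closely: the gerbe structure on $\Triv_{\geq 0}(M)$ via \'etale-local triviality of invertible prestable categories, the computation of the band from $\cO_U$-linear autoequivalences of $\QCoh(U)_{\geq 0}$ (your observation that connectivity kills the shifts $L[n]$ and hence the $\tH^1_\ett(X,\Z)$ factor is exactly the content of \cref{local-lemma} and the reason the paper works with $\cBr$ rather than $\cBre$), and the two-step monoidal comparison for $\Psi$ --- K\"unneth/external product $\QCoh_{(\chi,\chi')}(G\times_XH)\simeq\QCoh_\chi(G)\otimes_{\QCoh(X)}\QCoh_{\chi'}(H)$ followed by descent along the rigidification $G\times_XH\to G\star H$ --- is precisely \cref{external-product-theorem} plus \cref{rigidification-pullback-theorem}. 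Where you genuinely diverge is in how the equivalence is closed. You propose to build both a unit $M\simeq\QCoh_\id(\Triv_{\geq 0}(M))_{\geq 0}$ (by descending $\pi^*M$ along the gerbe as a weight-one datum) and a map $G\to\Triv_{\geq 0}(\QCoh_\id(G)_{\geq 0})$, checking both locally. The paper only constructs the second transformation (\cref{ess-surj}) and replaces the first by a direct proof that $\Triv_{\geq 0}$ is fully faithful (\cref{Triv-is-ff}), which reduces via the monoidal structure to computing $\Map(\mathbf 1,\mathbf 1)\simeq\Pic(X)\simeq\Map(X,\BGm)$. Your route is viable and arguably more geometric, but note it forces you to actually carry out the ``$M$ is recovered as the weight-one descent datum'' step, which is not purely formal: it needs the 1-affineness statement (\cref{stack-of-categories}) applied over the gerbe itself, together with an identification of the inertial action on $\pi^*M$ with the tautological trivialization, and this is roughly as much work as the full-faithfulness computation it replaces.

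The one place where your proposal has a real gap --- which, to your credit, you flag yourself --- is the symmetric monoidal enhancement, i.e.\ exactly the Binda--Porta conjecture. Two issues. First, ``inherit the coherences from $\QCoh(-)$ as a symmetric monoidal functor on stacks'' does not work off the shelf: $\QCoh(-)$ is monoidal for the Cartesian product of stacks, whereas the target of $\Psi$ carries the relative Lurie tensor product over $\QCoh(X)$ and the source carries the $\star$-product, which is a rigidification (quotient), not a fiber product; the passage from $G\times_XH$ to $G\star H$ and from $\boxtimes$ to $\otimes_{\QCoh(X)}$ each require their own coherence data. The paper resolves this by exhibiting $\Ger_{\Gm}(X)^{\mathlarger\star}$ as the operadic nerve of an explicit simplicial colored operad (\cref{star-product}) and defining $\Psi$ on multilinear maps by $f\mapsto f_*\circ\boxtimes^I$, which packages all higher coherences at once. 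Second, your plan to transport the monoidal structure from $\Psi$ to $\Phi$ ``by inverseness'' is legitimate only once $\Psi$ is already known to be a symmetric monoidal equivalence; but if you instead follow the paper's closure of the argument, the monoidal structure on $\Phi$ is needed \emph{before} that point, as an input to full faithfulness. The paper therefore builds it by hand (\cref{symm-mon}), and explicitly warns that the naive transport fails because there is no natural map $\Triv_{\geq 0}(M\otimes N)\to\Triv_{\geq 0}(M)\times_X\Triv_{\geq 0}(N)$; one must instead send a multilinear $f:\prod M_i\to N$ to $\{\phi_i\}\mapsto f\circ\prod\phi_i$ and check the band condition. If you keep your direct-unit strategy this second issue disappears, but the first one remains the technical heart of the theorem and cannot be left as an expectation.
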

	We will define all the necessary vocabulary in \cref{Construction-section}. Informally, $\Triv_{\geq 0}(M)$ is a stack over $X$ which, \'etale-locally in $X$, parametrizes equivalences between $M$ and $\QCoh(X)_{\geq 0}$, the unit object in $\cBr(X)$; $\QCoh_\id(G)_{\geq 0}$ is the $\infty$-category of quasicoherent sheaves over $G$ which are connective, and homogeneous with respect to the identity character of $\Gm$ in an appropriate sense (\cref{remark-homogeneous}): without the connectivity assumption, these are known in the literature as \textit{twisted sheaves}.
 
	\begin{rem}\label{derived-Brauer-H2}
	    By looking at \cref{defin-derived-Brauer}, one sees that by taking the $\pi_0$ of the equivalence in \cref{main-theorem} one obtains an isomorphism of abelian groups $$\Br(X)\simeq \tH^2(X,\Gm).$$ This isomorphism also appears in \cite[Example 11.5.7.15]{SAG}, but as mentioned before, it is a consequence of the equivalence of $\infty$-categories $$\cBr(X)\simeq \Map(X,\B^2\Gm)$$ whose proof does not use the interpretation of the right-hand-side as the space of $\Gm$-gerbes over $X$.
	\end{rem}

 \begin{rem}
     \cref{external-product-theorem} and \cref{rigidification-pullback-theorem}, which constitute the two steps of the proof that $\Psi$ carries a symmetric monoidal structure, provide a proof of \cite[Conjecture 5.27]{BP}.
 \end{rem}

 \section{Construction of the correspondence}\label{Construction-section}

 In the present section, we construct the two functors $\Psi$ (\cref{functor-psi}) and $\Phi$ (\cref{defin-triv}) appearing in \cref{main-theorem}.
 
	\subsection{Reminders on gerbes and twisted sheaves}\label{section-gerbes}
 \begin{defin}Let $X$ be a quasicompact quasiseparated (qcqs) scheme over a field $k$, and $\cY\to X$ a stack. The group stack $\cI_{\cY}:= \cY \times_{\cY \times_X \cY} \cY$ over $\cY$  is called the inertia group stack of $\cY$ over $X$ (\cite[Definition 8.1.17]{Olsson}).\end{defin}
	\begin{defin}[{\cite[Definition 12.2.2 with $\boldsymbol{\mu}=\Gm\times X$]{Olsson}}]
	    Let $X$ be a quasicompact quasiseparated (qcqs) scheme over a field $k$. A \textit{$\Gm$-gerbe over $X$} is the datum of: \begin{itemize}\item a stack in groupoids $G$ with a map $\alpha:G\to X$ which is, \'etale-locally in $X$,
     \begin{itemize}\item nonempty (it has a section)
     \item connected (every two sections are isomorphic).
     \end{itemize}
     \item an isomorphism of group stacks over $X$ $$\Gm\times X\xrightarrow{\sim}\alpha_*\cI_G$$ which we call \textbf{banding}.
     \end{itemize}
     A \textit{morphism of $\Gm$-gerbes} over $X$ is a morphism of stacks whose induced morphisms at the level of inertia groups commutes with the bandings.
     \end{defin}

\begin{defin}
    Let $X$ be a qcqs scheme. We denote by $\Ger_{\Gm}(X)$ the $(2,0)$-category whose objects are $\Gm$-gerbes over $X$, whose $1$-morphisms are morphisms of $\Gm$-gerbes and whose $2$-morphisms are natural transformations of maps of stacks.
\end{defin}
\begin{rem}
    Note that the $\Ger_{\Gm}(X)$ is indeed a $(2,0)$-category, since all morphisms of $\Gm$-gerbes are invertible (\cite[Lemma 12.2.4]{Olsson}).
\end{rem}

The following theorem can be proven by using the same arguments of \cite[Theorem 12.2.8]{Olsson}, but recording the identifications used in the proof as 1- and 2-morphisms.
\begin{thm}\label{gerbes-H2}
    There is an equivalence of $(2,0)$-categories
    $$\Ger_{\Gm}(X)\simeq \Map(X,\B^2\Gm).$$
\end{thm}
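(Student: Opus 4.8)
The plan is to enhance Olsson's bijection on isomorphism classes to a full equivalence of $(2,0)$-categories by identifying both sides with the classifying data for torsors under the group stack $\B\Gm$. First I would work in the $\infty$-topos of étale sheaves of spaces on $X$ and record that $\B^2\Gm$ is $2$-truncated with mapping space having homotopy groups $\pi_i\Map(X,\B^2\Gm)\cong\tH^{2-i}_\ett(X,\Gm)$ for $i=0,1,2$ and zero otherwise; concretely $\pi_0=\tH^2_\ett(X,\Gm)$, $\pi_1=\Pic(X)$, $\pi_2=\Gm(X)$. On the gerbe side, the banding $\Gm\times X\xrightarrow{\sim}\alpha_*\cI_G$ says precisely that $G$ is, étale-locally on $X$, equivalent to $\B\Gm\times X$ together with a fixed identification of its inertia with $\Gm$; this is exactly the datum of a torsor over $X$ under the group stack $\B\Gm=\tK(\Gm,1)$, which is commutative.

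The first construction step is the functor $\Ger_{\Gm}(X)\to\Map(X,\B^2\Gm)$ by descent, recording every identification as a higher morphism. Given a $\Gm$-gerbe $\alpha:G\to X$, I would pick an étale cover $\{U_i\to X\}$ on which $G$ acquires sections $s_i$ (possible by the nonemptiness clause). Over double overlaps $U_{ij}$ the sections $s_i,s_j$ are isomorphic by connectedness, and the banding turns the sheaf of such isomorphisms into a $\Gm$-torsor $L_{ij}$; over triple overlaps associativity of composition yields a coherent isomorphism $L_{ij}\otimes L_{jk}\simeq L_{ik}$, and over quadruple overlaps a cocycle relation. This is exactly a Čech $2$-cocycle valued in $\Gm$, i.e.\ a point of $\Map(X,\B^2\Gm)$. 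A morphism of $\Gm$-gerbes, being compatible with bandings, carries one system of descent data to another through a $\Gm$-torsor, hence a $1$-simplex between the two points; a $2$-morphism produces a homotopy, i.e.\ a $2$-simplex. Functoriality and coherence of these assignments, checked on the nerve of the cover, is precisely what ``recording the identifications as $1$- and $2$-morphisms'' means.

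To conclude that this functor is an equivalence of $(2,0)$-categories I would argue degree by degree. Bijectivity on $\pi_0$ together with essential surjectivity is Olsson's Theorem 12.2.8 verbatim. It then remains to show that for each gerbe the induced map of automorphism $2$-groups is an equivalence; since both source and target classify $\B\Gm$-torsors and $\B\Gm$ is commutative, all components are homogeneous, so it suffices to treat the trivial gerbe $\B\Gm\times X$. There the self-equivalences as a banded gerbe are given by tensoring with a $\Gm$-torsor, yielding $\pi_1=\Pic(X)=\tH^1_\ett(X,\Gm)$, while the automorphisms of the identity are global units, yielding $\pi_2=\Gm(X)=\tH^0_\ett(X,\Gm)$; these match the homotopy groups of $\Map(X,\B^2\Gm)$ above, and one checks directly that the descent functor realizes these identifications.

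The hard part will not be any single computation but the coherence bookkeeping: promoting the cocycle-level correspondence to a functor that is fully faithful on hom-$2$-categories, i.e.\ verifying that the space of morphisms between two gerbes maps equivalently (including at the $2$-morphism level) onto the space of paths between the corresponding points of $\Map(X,\B^2\Gm)$. The cleanest way to discharge this, and the way I would ultimately write it, is to bypass explicit cocycles and instead invoke the general classification of torsors in an $\infty$-topos, $\mathrm{Tors}_H(X)\simeq\Map(X,\B H)$ with $H=\B\Gm$, after establishing the equivalence between banded $\Gm$-gerbes and $\B\Gm$-torsors; this packages all the higher coherences automatically and turns the homotopy-group comparison above into a formality.
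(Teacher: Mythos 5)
Your proposal is correct, and its main line is exactly the route the paper itself takes: the paper offers no written-out proof, only the remark that one runs Olsson's argument for \cite[Theorem 12.2.8]{Olsson} while ``recording the identifications used in the proof as 1- and 2-morphisms,'' which is precisely your \v{C}ech construction of the $2$-cocycle $L_{ij}$ from local sections and the banding, together with the promotion of gerbe morphisms and $2$-morphisms to $1$- and $2$-simplices of $\Map(X,\B^2\Gm)$. Where you go beyond the paper is in actually saying how one would verify the result: the degreewise check that the induced map on automorphism $2$-groups of the trivial gerbe matches $\pi_1=\Pic(X)=\tH^1_\ett(X,\Gm)$ and $\pi_2=\Gm(X)=\tH^0_\ett(X,\Gm)$ (legitimate, since both sides are homogeneous under the commutative group stack $\B\Gm$, so one may reduce to $\BGm\times X$), and, more substantially, the alternative of identifying banded $\Gm$-gerbes with $\B\Gm$-torsors in the \'etale $\infty$-topos and invoking the classification $\mathrm{Tors}_{\B\Gm}(X)\simeq\Map(X,\B(\B\Gm))$. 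That last step is a genuinely different and cleaner discharge of the coherence bookkeeping that the paper's one-line sketch leaves implicit; its only cost is that the dictionary between banded gerbes with banding-compatible morphisms and $\B\Gm$-torsors must itself be established carefully (in particular that \emph{morphisms} of banded gerbes correspond exactly to torsor morphisms), which is standard but is the place where the actual content of Olsson's argument reappears.
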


We will not use this theorem in the proof of \cref{main-theorem}, but we will use it to justify some comments and comparisons with perspectives used by other Authors.

  \begin{construction}
     Let $G$ be a $\Gm$-gerbe over $X$. The derived $\infty$-category of quasicoherent sheaves on $G$ is denoted by $\QCoh(G)$ and it is a presentable stable compactly generated $\cO_X$-linear category. This follows from the combination of the following facts:
     \begin{itemize}\item it is true for $\BGm$ (\cite[Example 8.6]{Hall-Rydh} or \cite[Remark 5.8]{BP});
     \item every $\Gm$-gerbe $G$ over $X$ is \'etale-locally equivalent to $\BGm\times X$ (see \cite[\texttt{Tag 06QG}]{Stacks}); 
     \item presentable compactly generated categories satisfy \'etale descent (\cite[Theorem D.5.3.1]{SAG}.\end{itemize}
     We now recall the definition of $G$-twisted sheaves on $X$. This notion dates back to Giraud \cite{Gir} and later to Max Lieblich's thesis \cite{Lieblich}, and has been developed in the derived setting by Bergh and Schn\"urer \cite{Bergh-Schnurer} (using the language of triangulated categories) and Binda and Porta \cite{BP} (using the language of stable $\infty$-categories).

   Let now $\cF\in \QCoh(G)$. We have that $\cF$ is endowed with a canonical right action by $\cI_{G}$, called the \textit{inertial action} (see e.g. \cite[Section 3]{Bergh-Schnurer}).
	\end{construction}
	
	\begin{prop}\label{zero-homogeneous}
		Let $G$ be a $\Gm$-gerbe over a qcqs scheme $X$. The pullback functor $\QCoh(X)\to \QCoh(G)$ establishes an equivalence between $\QCoh(X)$ and the full subcategory of $\QCoh(G)$ spanned by those sheaves on which the inertial action is trivial.
	\end{prop}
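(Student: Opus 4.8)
The plan is to reduce to the local model $G=\BGm\times X$ by \'etale descent, and there to identify the sheaves with trivial inertial action as the weight-zero part of the canonical weight decomposition of $\QCoh(\BGm\times X)$. The pullback functor $\alpha^*\colon\QCoh(X)\to\QCoh(G)$ will then be exhibited as an equivalence onto this weight-zero part.

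First I would record that the inertial action is defined functorially in $G$, hence is compatible with base change along any \'etale map $U\to X$. Consequently the full subcategory of $\QCoh(G)$ spanned by the sheaves with trivial inertial action is cut out inside the \'etale sheaf of $\infty$-categories $U\mapsto\QCoh(G\times_X U)$ by a condition that can be tested after \'etale localization; since $\QCoh$ satisfies \'etale descent, this subfunctor is itself an \'etale sheaf. The pullback functor is likewise compatible with localization on $X$. Because every $\Gm$-gerbe is \'etale-locally equivalent to $\BGm\times X$, it suffices to fix a cover $\{U_i\to X\}$ trivializing $G$, prove the statement over each $\BGm\times U_i$, and check that the identifications are compatible with the transition data of the gerbe.

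Over $\BGm\times U$ I would use that $\Gm$ is diagonalizable, so that $\QCoh(\BGm\times U)$ is equivalent to the $\infty$-category of $\Z$-graded objects of $\QCoh(U)$, i.e. $\bigoplus_{n\in\Z}\QCoh(U)$. Under the banding, the inertial action on the weight-$n$ summand is through the character $t\mapsto t^n$ of $\Gm$; hence a sheaf has trivial inertial action exactly when it is concentrated in weight $0$. The weight-$0$ summand is precisely the essential image of $\alpha^*$ (the trivial-action, or weight-$0$, sheaves are exactly the pullbacks from $U$), so $\alpha^*$ is fully faithful with this image, settling the local case.

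The main obstacle, and the point where the gerbe structure genuinely enters, is to check that these local equivalences glue. The transition automorphisms of a $\Gm$-gerbe are classified by $\tH^1$ of the band, i.e. by line bundles on the overlaps $U_{ij}$, and such an automorphism acts on $\QCoh(\BGm\times U_{ij})\simeq\bigoplus_n\QCoh(U_{ij})$ by twisting the weight-$n$ summand by the $n$-th tensor power of the corresponding line bundle (this is exactly the twist that produces genuinely twisted sheaves in nonzero weight). On the weight-$0$ summand every such twist is trivial, so the weight-$0$ subcategories are glued by the \emph{untwisted} cocycle and descend to $\QCoh(X)=\lim\QCoh(U_\bullet)$, compatibly with $\alpha^*$. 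Assembling the local equivalences along the descent diagram then produces the claimed equivalence between $\QCoh(X)$ and the subcategory of sheaves with trivial inertial action.
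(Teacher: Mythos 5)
Your proof is essentially correct, but note that the paper itself gives no proof of \cref{zero-homogeneous}: it is stated as a recalled result in the ``Reminders'' subsection, with the general $\chi$-homogeneous decomposition attributed to Lieblich, Bergh--Schn\"urer and Binda--Porta. That said, your local-to-global strategy (reduce by \'etale descent to the trivial gerbe $\BGm\times U$, use the weight decomposition of $\QCoh(\BGm\times U)$, and observe that the gerbe's transition data twist the weight-$n$ summand by the $n$-th power of a line bundle and hence act trivially in weight $0$) is exactly the pattern the paper uses for the neighbouring statements, e.g.\ the proofs of \cref{external-product-theorem} and \cref{rigidification-pullback-theorem}, so your argument is a faithful reconstruction of the intended one. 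The one point you should tighten is the phrase ``sheaves on which the inertial action is trivial'': as the paper's own \cref{remark-homogeneous} stresses, triviality of an action is a priori extra structure, not a property, so to make your descent step legitimate you should define the subcategory as the full subcategory where the canonical map $i_{1}(\cF):\cF_{1}\to\cF$ (for the trivial character $1$) is an equivalence. Since $(\cF)_{1}=p_*(\act_\alpha^*\cF)$ is built from pullback, tensoring and pushforward along maps compatible with flat base change on $X$, this condition is indeed \'etale-local, which is what your first paragraph implicitly uses. With that adjustment your argument is complete.
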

	
	Note that the banding $\Gm \times G\to \cI_{G}$ induces a right action $\rho$ of $\Gm$ on any sheaf $\cF\in \QCoh(G)$, by composing the banding with the inertial action. On the other hand, for any character $\chi:\Gm\to \Gm$, $\Gm$ acts on $\cF$ on the left by scalar multiplication precomposed with $\chi$. Let us call this latter action $\sigma_\chi$.
	\begin{rem}Let $1$ be the trivial character of $\Gm$. \cref{zero-homogeneous} can be restated as: the pullback functor $\QCoh(X)\to \QCoh(G)$ induces an equivalence between $\QCoh(X)$ and the full subcategory of $\QCoh(G)$ where $\rho=\sigma_1$.
	\end{rem}
	
Let $G$ be a $\Gm$-gerbe over a qcqs scheme $X$, and $\chi$ a character of $\Gm$. We define the category of $\chi$-homogeneous sheaves over $G$, informally, as the full subcategory $\QCoh_\chi(G)$ of $\QCoh(G)$ spanned by those sheaves on which $\rho=\sigma_\chi$.
		
	\begin{rem}\label{remark-homogeneous}The above definition is a little imprecise, in that it does not specify the equivalences $\rho(\gamma,\cF)\simeq \sigma_\chi(\gamma,\cF), \gamma\in \Gm, \cF\in \QCoh(G)$. A formal definition is given in \cite[Definition 5.14]{BP}. There, the Authors define an idempotent functor $(-)_\chi: \QCoh(G)\to \QCoh(G)$, taking the ``$\chi$-homogeneous component''. This functor is t-exact (\cite[Proof of Lemma 5.17]{BP}), and comes with canonical maps $i_{\chi,\cF}:\cF_\chi\to \cF$. The category $\QCoh_\chi(G)$ is defined as the full subcategory of $\QCoh(G)$ spanned by those $\cF$ such that $i_{\chi}(\cF)$ is an equivalence. 
	To fix the notations, let us make these constructions explicit. If $X$ is a scheme and $G$ a $\Gm$-gerbe, we have the following diagram 
	$$\begin{tikzcd}
G \arrow[r, "u"] & G\times \B\Gm \arrow[r, "\act_\alpha"] \arrow[d, "p"] \arrow[ld, "q"] & G \arrow[d, "\pi"] \\
\B\Gm            & G \arrow[r, "\pi"]                                                   & X     \end{tikzcd}$$
	where $p,q$ are the projections, $u$ is the atlas of the trivial gerbe and $\act_{\alpha}$ is the morphism induced by the banding $\alpha$ of $G$. See \cite[Section 5]{BP} for more specific description of these maps. We denote by $L_{\chi}$ the line bundle over $\B\Gm$ associated to the character $\chi$, while $\cL_{\chi}:=q^*L_{\chi}$. Given $\cF \in \QCoh(G)$ and $\chi$ a character of $\Gm$, we define (following \cite[Construction 5.17 and proof of Lemma 5.20]{BP})
	$$ (\cF)_{\chi}:= p_*(\act_{\alpha}^*(\cF) \otimes \cL_{\chi}^{\vee}).$$
Intuitively, $\act_\alpha^* \cF$ gains a decomposition into $\Gm$-equivariant summands; tensoring with $\cL_\chi^\vee$ shifts the weights of this decomposition by $\chi^{-1}$, making the $\chi$-equivariant summand into the $1$-equivariant (i.e. invariant) summand. Pushing forward along $p$ selects this invariant component as a quasicoherent sheaf over $G$.

  Note that $(\cF)_\chi$ can be also seen as (notations as in the diagram above) $$u^*(p^*p_*(\act_\alpha^*\cF\otimes \cL_{\chi}^{\vee})\otimes \cL_\chi).$$ 
  
  By pulling back along $u$ the counit of the adjunction $p^*\dashv p_*$ we obtain a canonical morphism $i_{\chi}(\cF):(\cF)_{\chi} \rightarrow \cF$.
	
\cite{BP} also prove that there is a decomposition $$\QCoh(G)\simeq\prod_{\chi\in \textup{Hom}(\Gm,\Gm)}\QCoh_\chi(G)$$
    building on the fact that $\cF\simeq \bigoplus_{\chi\in \textup{Hom}(\Gm,\Gm)}\cF_\chi$. The same result was previously obtained by Lieblich in the setting of abelian categories and by Bergh-Schn\"urer in the setting of triangulated categories.\end{rem}
	
	\begin{defin}\label{twisted-sheaves}
		In the case when $\chi$ is the identity character $\id:\Gm\to \Gm$, $\QCoh_\chi(G)$ is usually called the \textit{category of $G$-twisted sheaves on $X$}.
	\end{defin}
	
	The relationship between categories of twisted sheaves and Azumaya algebras has been intensively studied, see \cite{DeJong}, \cite{DeJong-Gabber}, \cite{Lieblich}, \cite{Hall-Rydh}, \cite{Bergh-Schnurer}, \cite{BP}. Given a $\Gm$-gerbe $G$ over $X$, its category of twisted sheaves $\QCoh_\id(G)$ admits a compact generator, whose algebra of endomorphisms is a derived Azumaya algebra $A_G$. In contrast, if we restrict ourselves to the setting of abelian categories and consider \textit{abelian} categories of twisted sheaves, this reconstruction mechanism does not work anymore. This is one of the reasons of the success of To\"en's derived approach.

 \begin{rem}\label{bi-homogeneous}
     Let $G,G'$ be $\Gm$-gerbes over $X$, and $\chi,\chi'$ two characters of $\Gm$. A straightforward generalization of the above definitions allows to define the sub-$\infty$-category $$\QCoh_{(\chi,\chi')}(G\times_XG')\subset \QCoh(G\times_XG')$$ spanned by ``$(\chi,\chi')$-homogeneous sheaves'': it suffices to replace tha line bundle $L_\chi$ by the external product $L_\chi\boxtimes L_{\chi'}$.
 \end{rem}

\begin{rem}\label{equalizer}
	 Let $C$ be a presentable $\infty$-category, $p:C \rightarrow C$ be an endofunctor and $\eta: p \Rightarrow \id_C$ be a natural transformation. We have a diagram 
 $$\begin{tikzcd}
    C \arrow[bend left]{r}[name=LUU]{p}
    \arrow[bend right]{r}[name=LDD,below]{\id_C}
    \arrow[Rightarrow,to path=(LUU) -- (LDD)\tikztonodes]{r}{\eta}
    & 
    C
  \end{tikzcd}$$ of $\infty$-categories. Let $C^0$ be the subcategory of $C$ spanned by the elements $X$ of $C$ such that $\eta(X)$ is an equivalence. 
Now consider $(C_1,p_1,\eta_1)$ and $(C_2,p_2,\eta_2)$ triples as the above one, and let $\rho:C_1 \rightarrow C_2$ be a functor and $\alpha: p_2\circ \rho \Rightarrow \rho \circ p_1$ be an equivalence of functors. If $(\id_{\rho} * \eta_1) \circ \alpha = (\eta_2 \circ \id_{\rho})$ then $(\rho,\alpha)$ is a morphism between the two diagrams and therefore there exists a unique morphism $C^0_1 \rightarrow C^0_2$ compatible with all the data.

If $G$ is a $\Gm$-gerbe and $\chi$ is a character of $\Gm$, the endofunctors $p=(-)_{\chi},q=\id$ of $\QCoh(G)$ and the natural transformation $\eta=i_{\chi}$ introduced in Remark \ref{remark-homogeneous} form a diagram as above. In this situation, $\QCoh_{\chi}(G)$ is our $C^0$. Therefore, a morphism $G\to G'$ induces a morphism $\QCoh_\chi(G)\to \QCoh_\chi(G')$. Along the same lines, one proves that the association $$G\mapsto \QCoh_\id(G)$$ yields a well-defined functor $$\Ger_{\Gm}(X)\to \Lin(X).$$
	\end{rem}

We end this subsection by describing the symmetric monoidal structure on the category of $\Gm$-gerbes. 

    Let $X$ be a scheme and $G_1$ and $G_2$ be two $\Gm$-gerbes on $X$. One can construct the product $G_1 \star G_2$, which is a $\Gm$-gerbe such that its class in cohomology is the product of the classes of $G_1$ and $G_2$ (see \cite[Conjecture 5.23]{BP}). Clearly, this is not enough to define a symmetric monoidal structure on the category of $\Gm$-gerbes. The idea is to prove that this $\star$ product has a universal property in the $\infty$-categorical setting, which allows us to define the symmetric monoidal structure on the category of $\Gm$-gerbes using the theory of simplicial colored operads and $\infty$-operads (see Chapter 2 of \cite{HA}).

	\begin{construction}\label{star-product}
	
	Let $\AbGer(X)$ be the $(2,1)$-category of abelian gerbes over $X$ and $\AbGr(X)$ the $(1,1)$-category of sheaves of abelian groups over $X$. We have the so-called banding functor
	$$ \Band: \AbGer(X) \longrightarrow \AbGr(X),$$ 
 $$(\alpha:G\to X)\mapsto \alpha_*\cI_G$$
	(see \cite[(3.2)]{Bergh-Schnurer}). It is easy to prove that $\Band$ is symmetric monoidal with respect to the two Cartesian symmetric monoidal structures of the source and target, that is it extends to a symmetric monoidal functor
    $\Band: \AbGer(X)^{\times} \rightarrow \AbGr(X)^{\times}$ of colored simplicial operads (and therefore also of $\infty$-operads).
    
    Recall that, given a morphism of sheaf of groups $\phi:\mu \rightarrow \mu'$ and a $\mu$-gerbe $G$, we can construct a $\mu'$-gerbe, denoted by $\phi_*G$, and a morphism $\rho_{\phi}:G \rightarrow \phi_*G$ whose image through the banding functor is exactly $\phi$. This pushforward construction is essentially unique and verifies weak funtoriality. This follows from the following result: if $G$ is a gerbe banded by $\mu$, then the induced banding functor 
    $$\textup{Band}_{G/}: \AbGer(X)_{G/} \longrightarrow \AbGr(X)_{\mu/}$$
    is an equivalence and the pushforward construction is an inverse (see \cite[Proposition 3.9]{Bergh-Schnurer}). This also implies that $\Band$ is a coCartesian fibration.

    Let $\Fin_*$ be the category of pointed finite sets. Consider now the morphism $\Fin_* \rightarrow \AbGr(X)^{\times}$ induced by the algebra object $\mathbb{G}_{\textup{m},X}$ in $\AbGr(X)^{\times}$.\footnote{Note that, $\Gm$ being abelian, it is an algebra object in $\AbGr(X)^\times$, because the multiplication map is a morphism of groups.} We consider the following pullback diagram
    $$\begin{tikzcd}
    \mathcal{G} \arrow[d, "B"] \arrow[r] & \AbGer(X)^{\times} \arrow[d, "\Band"] \\
    \Fin_* \arrow[r, "\mathbb G_{\textup{m},X}"]       & \AbGr(X)^{\times} ;                  
    \end{tikzcd}$$
    where $B$ is again a coCartesian fibration. This implies that $\mathcal{G}$ is a symmetric monoidal structure over the fiber category $\mathcal{G}_{\langle 1\rangle}:=B^{-1}(\langle 1\rangle)$ which is exactly the category of $\Gm$-gerbes. The operadic nerve of $\mathcal{G}$ will be the symmetric monoidal $\infty$-category of $\Gm$-gerbes (see \cite[Proposition 2.1.1.27]{HA}).
    
    This symmetric monoidal structure coincides with the $\star$ product of gerbes defined in \cite[Construction 3.8]{Bergh-Schnurer}. Indeed, following the rigidification procedure in \cite[Exercise 12.F]{Olsson} and \cite[Appendix A]{AOV}, one can characterize the $\star$-product as the pushforward of the multiplication map $m:\Gm\times \Gm\to \Gm$, i.e. if $G_1$ and $G_2$ are two $\Gm$-gerbes over $X$, then $$G_1 \star G_2:= m_*(G_1 \times_X G_2).$$

    Because of this, one can also prove that under the identification $\Ger_{\Gm}(X)\simeq \Map(X,\B^2\Gm)$ from \cref{gerbes-H2} the $\star$-product coincides also with the product induced by the multiplication $\B^2\Gm\times \B^2\Gm\to \B^2\Gm$.
    \end{construction}
    \begin{rem}
	     For future reference, let us describe $\cG$ as a simplicial colored operad. The objects (or colors) of $\cG$ are $\Gm$-gerbes over $X$. Let $\{ G_i\}_{i \in I}$ be a sequence of objects indexed by a finite set $I$ and $\cH$ another object; we denote by $\prod_{i\in I}G_i$ the fiber product of $G_i$ over $X$. The simplicial set of multilinear maps $\textup{Mul}(\{ G_i\}_{i \in I}, \cH)$ is the full subcategory of the $1$-groupoid of morphisms  $\textup{Map}_X(\prod_{i \in I} G_i, \cH)$ of gerbes over $X$ such that its image through the banding functor is the $n$- fold multiplication map of $\Gm$, where $n$ is the cardinality of $I$. Note that because the simplicial sets of multilinear maps are Kan complexes by definition, then they are fibrant simplicial sets. This is why the operadic nerve gives us a symmetric monoidal structure, see \cite[Proposition 2.1.1.27]{HA}.
	\end{rem}
    \begin{defin}
		We denote by $\Ger_{\Gm}(X)^{\mathlarger\star}$ the symmetric monoidal $\infty$-groupoid of $\Gm$-gerbes over $X$, with the symmetric monoidal structure given by \cref{star-product}.
	\end{defin}

	\subsection{Reminders on stable and prestable linear categories}\label{section-prestable}
 In this subsection, we recall some definitions and needed properties in the context of linear prestable $\infty$-categories. Almost all notations, and all the results, are taken from \cite{SAG}: the purpose of this summary is only to gather all the needed vocabulary in a few pages, for the reader's convenience.
	\begin{defin}
		Let $\cc$ be an $\infty$-category. We will say that $\cc$ is \textit{prestable} if the following conditions are satisfied:
		\begin{itemize}\item The $\infty$-category $\cc$ is pointed and admits finite colimits.
			\item The suspension functor $\Sigma : \cc \to \cc$ is fully faithful.
			\item For every morphism $f : Y \to\Sigma Z$ in $\cc$, there exists a pullback and pushout square
			$$\begin{tikzcd} X\arrow[r, "f'"]\arrow[d] &Y\arrow[d, "f"]\\ 0 \arrow[r]&\Sigma Z.
			\end{tikzcd}
			$$
		\end{itemize}
	\end{defin}

	\begin{prop}[{\cite[Proposition C.1.2.9]{SAG}}]\label{prestable-as-connective}
		Let $\cc$ be an $\infty$-category. Then the following conditions are equivalent:
		\begin{itemize}\item $\cc$ is prestable and has finite limits.
			\item There exists a stable $\infty$-category $\cD$ equipped with a t-structure $(\cD_{\geq 0}, \cD_{\leq 0})$ and an equivalence $\cc\simeq \cD_{\geq 0}$.
		\end{itemize}
	\end{prop}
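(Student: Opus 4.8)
The two conditions are exchanged by the operations $\cD\mapsto \cD_{\geq 0}$ and $\cc\mapsto \cD$, so the plan is to treat the two implications separately. For the direction ``t-structure $\Rightarrow$ prestable'', suppose $\cD$ is stable with a t-structure $(\cD_{\geq 0},\cD_{\leq 0})$. The inclusion $\cD_{\geq 0}\hookrightarrow \cD$ admits the right adjoint $\tau_{\geq 0}$, so it preserves finite colimits and $\cD_{\geq 0}$ is pointed with finite colimits; being coreflective it also inherits all finite limits from $\cD$, computed as $\tau_{\geq 0}$ of the ambient limit. Since $\Sigma$ is an equivalence on $\cD$ and restricts to the fully faithful composite $\cD_{\geq 0}\xrightarrow{\sim}\cD_{\geq 1}\hookrightarrow \cD_{\geq 0}$, the suspension of $\cD_{\geq 0}$ is fully faithful. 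Finally, for $f\colon Y\to \Sigma Z$ with $Y,Z\in \cD_{\geq 0}$ the fibre sequence $Z\to \operatorname{fib}(f)\to Y$ exhibits $\operatorname{fib}(f)$ as an extension of connective objects, hence connective since $\cD_{\geq 0}$ is closed under extensions; in a stable category this fibre square is simultaneously a pullback and a pushout, which is exactly the third prestable axiom. This checks that $\cD_{\geq 0}$ is prestable with finite limits.

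For the converse I would reconstruct $\cD$ by formally inverting the suspension. Concretely, set
$$\cD := \operatorname*{colim}\bigl(\cc \xrightarrow{\ \Sigma\ } \cc \xrightarrow{\ \Sigma\ } \cc \to \cdots \bigr),$$
the colimit taken in the $\infty$-category of small $\infty$-categories with finite colimits and right-exact functors (note $\Sigma$ is right-exact on a prestable category, being a left adjoint to $\Omega$). Because $\Sigma$ is fully faithful, each structure functor $\iota_n\colon \cc\to \cD$ is fully faithful and $\cD$ is the increasing union of the copies of $\cc$. The category $\cD$ is pointed and has finite colimits, and the suspension $\Sigma_\cD$ is now the shift of the defining diagram, hence an equivalence; by the recognition theorem for stable $\infty$-categories \cite{HA}, a pointed $\infty$-category with finite colimits on which $\Sigma$ is invertible is stable, so $\cD$ is stable. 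By construction every object of $\cD$ has the form $\Omega_\cD^{\,n}\iota_0(X)$, so that $\cD=\bigcup_n \Sigma_\cD^{-n}\,\iota_0(\cc)$.

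It remains to endow $\cD$ with a t-structure whose connective part is $\iota_0(\cc)\simeq \cc$. Set $\cD_{\geq 0}:=\iota_0(\cc)$; this subcategory is closed under $\Sigma_\cD$ (indeed $\Sigma_\cD\iota_0=\iota_0\Sigma$) and closed under extensions (a general feature of prestable categories). To obtain a t-structure through the standard recognition criterion \cite{HA}, one must show that the inclusion $\cD_{\geq 0}\hookrightarrow \cD$ admits a right adjoint, i.e.\ that every object of $\cD$ has a connective cover $\tau_{\geq 0}$. I expect this to be the main obstacle, and it is precisely here that the finite-limits hypothesis on $\cc$ is used: it is what provides the $n$-connective cover functors $\tau^{\cc}_{\geq n}\colon \cc\to \Sigma^n\cc$, and assembling these across the tower $\cD=\bigcup_n\cD_{\geq -n}$ yields the desired $\tau_{\geq 0}$ on all of $\cD$. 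Once the truncations are in place the t-structure axioms follow formally, the identification $\cc\simeq \cD_{\geq 0}$ is the fully faithfulness of $\iota_0$, and the two assignments $\cD\mapsto \cD_{\geq 0}$ and $\cc\mapsto \cD$ are readily checked to be mutually inverse.

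The role of the finite-limits assumption is worth emphasising. Without it, the colimit construction still embeds $\cc$ as a full subcategory of a stable $\cD$ that is closed under suspension and extensions; but this subcategory need no longer be the aisle of a t-structure, since the connective covers producing the required right adjoint may fail to exist. The finite-limits hypothesis is exactly what closes this gap, and so I would organize the write-up to isolate the construction of the truncation functors as the single technical heart of the argument, with everything else reduced to formal consequences of the stable recognition theorem and the adjunction $\Sigma_\cD\dashv \Omega_\cD$.
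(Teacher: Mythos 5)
First, a remark on scope: the paper does not prove this statement; it is quoted verbatim from \cite[Proposition C.1.2.9]{SAG}, so there is no internal proof to compare against. Your route --- the easy direction by hand, and the converse via the Spanier--Whitehead category $\operatorname{colim}(\cc\xrightarrow{\Sigma}\cc\xrightarrow{\Sigma}\cdots)$ --- is the same strategy Lurie uses. Your first direction is complete and correct as written: closure of $\cD_{\geq 0}$ under extensions gives connectivity of $\operatorname{fib}(f)$, and the pullback/pushout square in $\cD$ descends to $\cD_{\geq 0}$ because finite colimits there are computed in $\cD$ while finite limits are $\tau_{\geq 0}$ of the ambient ones.

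The genuine gap is exactly where you announce that you ``expect the main obstacle'': you never construct the connective covers, and without them the t-structure on $\cD=\mathrm{SW}(\cc)$ is not established. The fix is short and should be included. Since $\cc$ has finite limits, $\Sigma$ admits a right adjoint $\Omega$ in $\cc$, and since $\Sigma$ is fully faithful the unit $\id\to\Omega\Sigma$ is an equivalence. For $Y\in\cc$ the counit $\Sigma\Omega Y\to Y$ is then universal among maps to $Y$ from the essential image of $\Sigma$: for $Z\in\cc$ one has $\Map_{\cc}(\Sigma Z,\Sigma\Omega Y)\simeq\Map_{\cc}(Z,\Omega Y)\simeq\Map_{\cc}(\Sigma Z,Y)$, compatibly with the counit. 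Iterating, $\Sigma^{n}\Omega^{n}Y\to Y$ is the $n$-connective cover, and for a general object $\Sigma_{\cD}^{-n}\iota_{0}Y$ of $\cD$ one sets $\tau_{\geq 0}(\Sigma_{\cD}^{-n}\iota_{0}Y):=\iota_{0}(\Omega^{n}Y)$ with structure map $\Sigma_{\cD}^{-n}$ of the counit; the equivalence $\Omega\Sigma\simeq\id$ guarantees independence of the chosen presentation $n$. This produces the right adjoint to $\iota_{0}(\cc)\hookrightarrow\cD$, and a full subcategory of a stable $\infty$-category that is closed under suspension and coreflective is the aisle of a t-structure (an aisle in the sense of Keller--Vossieck; see also \cite[\S 1.2.1]{HA}), which completes the argument. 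Two smaller points to tidy: $\cc$ is not assumed small, so either form the colimit in a larger universe or note that the construction is insensitive to this; and your claim that $\iota_{0}(\cc)$ is closed under extensions needs the observation that a fibre of a map $X''\to\Sigma X'$ with $X',X''\in\iota_{0}(\cc)$ can be computed inside $\cc$ by the third prestable axiom and is preserved by $\iota_{0}$ because the relevant square is also a pushout.
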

	
	\begin{prop}[{\cite[Proposition C.1.4.1]{SAG}}]\label{proposition-Grothendieck-cats}
		Let $\cc$ be a presentable $\infty$-category. Then the following conditions are equivalent:
		\begin{itemize}\item $\cc$ is prestable and filtered colimits in $\cc$ are left exact.
			\item There exists a presentable stable $\infty$-category $\cD$ equipped with a t-structure $(\cD_{\geq 0}, \cD_{\leq 0})$ compatible with filtered colimits, and an equivalence $\cc\simeq \cD_{\geq 0}$.
		\end{itemize}
	\end{prop}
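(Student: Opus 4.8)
The plan is to prove the two implications separately, leaning on the non-presentable equivalence of \cref{prestable-as-connective} and upgrading it to keep track of presentability and of the behaviour of filtered colimits. Throughout I use that a presentable $\cc$ automatically has all finite limits, so that \cref{prestable-as-connective} is applicable and produces a stable $\cD$ with a t-structure and $\cc\simeq\cD_{\geq 0}$; the entire content of the presentable refinement is then to arrange that $\cD$ can be taken presentable and that the t-structure is compatible with filtered colimits.

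I begin with the implication from the second condition to the first, which is the routine direction. Assume $\cc\simeq\cD_{\geq 0}$ for a presentable stable $\cD$ whose t-structure is compatible with filtered colimits. By \cref{prestable-as-connective}, $\cc$ is prestable. Presentability of $\cc=\cD_{\geq 0}$ follows from the general fact that the connective part of a presentable stable $\infty$-category, for a t-structure compatible with filtered colimits, is again presentable (it is closed under colimits in $\cD$ and accessible). For left exactness of filtered colimits I would use that the inclusion $i\colon\cc=\cD_{\geq 0}\hookrightarrow\cD$ is a left adjoint to $\tau_{\geq 0}$: hence filtered colimits in $\cc$ agree with those in $\cD$, while finite limits in $\cc$ are computed as $\tau_{\geq 0}$ applied to finite limits in $\cD$. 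Given a filtered $I$ and a finite $J$, for $F\colon I\times J\to\cc$ one then has
\[
\varinjlim_{I}\varprojlim_{J} F \simeq \varinjlim_{I}\tau_{\geq 0}\varprojlim_{J}^{\cD} F \simeq \tau_{\geq 0}\varinjlim_{I}\varprojlim_{J}^{\cD} F \simeq \tau_{\geq 0}\varprojlim_{J}^{\cD}\varinjlim_{I} F \simeq \varprojlim_{J}\varinjlim_{I} F,
\]
where the second equivalence uses that compatibility of the t-structure with filtered colimits means precisely that $\tau_{\geq 0}$ commutes with filtered colimits, and the third uses that in the \emph{stable} category $\cD$ a finite limit is a finite colimit and hence commutes with the filtered colimit. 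This is exactly left exactness.

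For the converse, the substantial direction, assume $\cc$ is presentable, prestable, and that filtered colimits in $\cc$ are left exact. Prestability makes the suspension $\Sigma\colon\cc\to\cc$ fully faithful, and I would realize the stable category attached to $\cc$ as the stabilization obtained by formally inverting $\Sigma$, namely
\[
\cD := \varinjlim\bigl(\cc \xrightarrow{\ \Sigma\ } \cc \xrightarrow{\ \Sigma\ } \cc \to \cdots\bigr)
\]
computed in $\Pr^{\mathrm L}$. Since $\Pr^{\mathrm L}$ is cocomplete, $\cD$ is presentable; since $\Sigma$ is fully faithful and becomes invertible in the colimit, $\cD$ is stable and the structure map $\cc\to\cD$ is fully faithful, identifying $\cc$ with the connective part $\cD_{\geq 0}$ of the induced t-structure (this recovers \cref{prestable-as-connective} in a manifestly presentable model). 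The only remaining, and decisive, point is to check that this t-structure is compatible with filtered colimits.

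To verify compatibility I would pass to the dual presentation of the same $\cD$: computing the $\Pr^{\mathrm L}$-colimit as the limit of the right adjoints identifies $\cD$ with $\varprojlim(\cdots\xrightarrow{\Omega}\cc\xrightarrow{\Omega}\cc)$ in $\Cat_\infty$, i.e. with spectrum objects of $\cc$, whose underlying data are sequences $(X_n)$ together with equivalences $X_n\simeq\Omega X_{n+1}$. Now $\Omega$ is a finite limit, so the hypothesis that filtered colimits in $\cc$ are left exact says exactly that $\Omega$ preserves filtered colimits; consequently the evaluation functors $\cD\to\cc$ preserve filtered colimits and filtered colimits in $\cD$ are computed levelwise. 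From this levelwise description it is straightforward to see that $\cD_{\leq -1}$ is closed under filtered colimits, equivalently that $\tau_{\geq 0}$ preserves them, which is the required compatibility. The main obstacle is precisely this last step: guaranteeing simultaneously that the stabilization stays presentable and that filtered colimits behave levelwise in it. This is where left exactness is indispensable — without it the two presentations of $\cD$ would not share levelwise filtered colimits, and the t-structure could fail to be compatible with them.
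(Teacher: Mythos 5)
The paper gives no proof of this proposition — it is quoted verbatim from \cite[Proposition C.1.4.1]{SAG} — so the only comparison available is with Lurie's argument, and your proof is a correct reconstruction along essentially the same lines: the easy direction via the adjunction $\iota \dashv \tau_{\geq 0}$ and stability of $\cD$, and the hard direction by realizing $\cD$ as the stabilization $\Sp(\cc)$, presentable as a colimit in $\Pr^{\mathrm{L}}$ and computed as spectrum objects via the right adjoints, with left exactness of filtered colimits entering exactly where you say it does (to make $\Omega$, hence the levelwise description of filtered colimits in $\Sp(\cc)$, work). The only compressed step is the identification of $\cc$ with the connective part of a t-structure on your specific model $\Sp(\cc)$ — \cref{prestable-as-connective} as stated only supplies \emph{some} stable $\cD$, so one must either rerun its proof (which indeed constructs $\Sp(\cc)$) or check directly that the essential image of $\Sigma^\infty$ generates a t-structure; this is standard but worth acknowledging.
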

	\begin{defin}[{\cite[Definition C.1.4.2]{SAG}}]\label{defin-Grothendieck-cats}Let $\cc$ be a presentable $\infty$-category. We will say that $\cc$ is \textit{Grothendieck}
		if it satisfies the equivalent conditions of \cref{proposition-Grothendieck-cats}. Following \cite[Definition C.3.0.5]{SAG}, we denote the $\infty$-category of Grothendieck presentable $\infty$-categories (and colimit-preserving functors between them) by $\Groth_\infty$. We also denote the category of presentable stable $\infty$-categories (and colimit-preserving functors between them) by $\Pr^{\textup{L}}_{\textup{St}}$.
	\end{defin}
	\begin{rem}
	By \cite{HA} and \cite[Theorem C.4.2.1]{SAG}, both $\Pr^\textup{L}_\textup{St}$ and $\Groth_\infty$ inherit a symmetric monoidal structure from $\Pr^\textup{L}$ which we denote again by $\otimes$. The stabilization functor $\textup{st}:\Pr^\textup{L}\to \Pr^\textup{L}_{\textup{St}}$ (and therefore its restriction to $\Groth$) is symmetric monoidal (this follows from \cite[Example 4.8.1.23]{HA} by keeping in mind that $\Sp\otimes \Sp\simeq \textup{st}(\Sp)\simeq \Sp$).
\end{rem}
	\begin{defin}Let $X$ be a qcqs scheme. An $\cO_X$-linear prestable $\infty$-category is an object of $$\Mod_{\QCoh(X)_{\geq 0}}(\Groth_\infty^\otimes).$$
	
	A stable presentable $\cO_X$-linear $\infty$-category is an object of $$\Mod_{\QCoh(X)}(\Pr^{\textup L, \otimes}_{\textup{St}}).$$
		\end{defin}

		\begin{rem}\label{rem-product}The category $\Mod_{\QCoh(X)_{\geq 0}}(\Groth_\infty^\otimes)$ has a tensor product $-\otimes_{\QCoh(X)_{\geq 0}}-$ (which we will abbreviate by $\otimes$) induced by the Lurie tensor product of presentable $\infty$-categories. See \cite[Theorem C.4.2.1]{SAG} and \cite[Section 10.1.6]{SAG} for more details. The same is true for $\Mod_{\QCoh(X)}(\Pr^{\textup L, \otimes}_{\textup{St}})$. There is a naturally induced stabilization functor $$\Mod_{\QCoh(X)_{\geq 0}}(\Groth_\infty^\otimes)\to \Mod_{\QCoh(X)}(\Pr^{\textup L, \otimes}_{\textup{St}})$$ which is symmetric monoidal with respect to these structures (again by an argument analogous to the one sketched in the nonlinear setting). \end{rem}

	$\Mod_{\QCoh(X)_{\geq 0}}(\Groth_\infty^\otimes)$ and $\Mod_{\QCoh(X)}(\Pr^{\textup L, \otimes}_{\textup{St}})$ satisfy a very important ``descent'' property, which is what Gaitsgory \cite{Gaitsgory-affineness} calls 1-affineness.
	
	\begin{construction}
		The functors $$\textup{CAlg}_k\to \Cat_\infty$$
		$$R\mapsto \Mod_{\QCoh(X)_{\geq 0}}(\Groth_\infty^\otimes)$$
		$$R\mapsto \Mod_{\QCoh(X)}(\Pr^{\textup L, \otimes}_{\textup{St}})$$
		can be right Kan extended to functors $$\QStk,\QStkpre:\Sch_k^\textup{op}\to \Cat_\infty.$$ This gives a meaning to the expressions $\QStk(X),\QStkpre(X)$, which can be thought of as ``the category of sheaves of $\QCoh$-linear (resp. $\QCoh_{\geq 0}$-linear) (pre)stable categories on $X$''. For any $X\in \Sch_k$, there are well-defined ``global sections functors'' $$\QStk(X)\to\Mod_{\QCoh(X)}(\Pr^{\textup L, \otimes}_{\textup{St}}),$$$$ \QStkpre(X)\to \Mod_{\QCoh(X)_{\geq 0}}(\Groth_\infty^\otimes)$$ constructed in \cite[discussion before Theorem 10.2.0.1]{SAG}.
		
		\end{construction}
	\begin{thm}\label{stack-of-categories}Let $X$ be a qcqs scheme over $k$. Then the global sections functors $$\QStk(X)\to \Mod_{\QCoh(X)}(\Pr^{\textup L, \otimes}_{\textup{St}})$$ $$\QStkpre(X)\to \Mod_{\QCoh(X)_{\geq 0}}(\Groth_\infty^\otimes)$$ are symmetric monoidal equivalences.\end{thm}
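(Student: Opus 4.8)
The statement is the assertion that every qcqs scheme $X$ is $1$-affine in the sense of Gaitsgory \cite{Gaitsgory-affineness}, and the plan is to prove it by reduction to the affine case via descent, running the stable and prestable versions in parallel (in the prestable case $\QCoh(X)$, $\Pr^{\textup{L},\otimes}_{\textup{St}}$ are replaced throughout by $\QCoh(X)_{\geq 0}$, $\Groth_\infty^\otimes$). By the construction of $\QStk$ and $\QStkpre$ as right Kan extensions from affines, one has for every qcqs $X$ a limit presentation
$$\QStk(X)\simeq \lim_{\Spec R\to X}\Mod_{\QCoh(\Spec R)}(\Pr^{\textup{L},\otimes}_{\textup{St}}),$$
indexed by the affine schemes over $X$, and the global sections functor is the canonical comparison of this limit with $\Mod_{\QCoh(X)}(\Pr^{\textup{L},\otimes}_{\textup{St}})$ obtained by base change along each $\Spec R\to X$. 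The first step is the affine base case: when $X=\Spec A$ is itself affine, the indexing category has $\id\colon\Spec A\to\Spec A$ as an initial object, so the limit collapses to its value there and the global sections functor is canonically the identity. Thus the theorem holds tautologically for affine $X$, in both the stable and prestable variants.

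The heart of the proof is a descent statement for the presheaf of module categories. Fixing an affine (Zariski, or more generally \'etale) cover of $X$ with \v{C}ech nerve $U_\bullet$, one must exhibit an equivalence
$$\Mod_{\QCoh(X)}(\Pr^{\textup{L},\otimes}_{\textup{St}})\;\xrightarrow{\ \sim\ }\;\lim_{[n]\in\Delta}\Mod_{\QCoh(U_n)}(\Pr^{\textup{L},\otimes}_{\textup{St}}).$$
I would assemble this from two inputs. First, descent for quasicoherent sheaves: $\QCoh(X)\simeq\lim_{[n]}\QCoh(U_n)$ as $\cO_X$-linear symmetric monoidal presentable categories, together with the connective analogue $\QCoh(X)_{\geq 0}\simeq\lim_{[n]}\QCoh(U_n)_{\geq 0}$, where one uses that the relevant t-structures are compatible with filtered colimits and glue along the cover, as guaranteed by \cref{proposition-Grothendieck-cats}. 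Second, the fact that forming module categories $\mathcal C\mapsto\Mod_{\mathcal C}(\Pr^{\textup{L},\otimes}_{\textup{St}})$ carries this limit presentation of $\QCoh(X)$ to the limit of the associated module categories; this is flat (resp. \'etale) descent for presentable module categories, for which I would invoke the descent theorem for compactly generated presentable stable categories \cite[Theorem D.5.3.1]{SAG} in conjunction with the symmetric monoidal and base-change formalism recorded in \cite[Theorem C.4.2.1]{SAG}.

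Granting the descent statement, the conclusion for general qcqs $X$ follows by reduction to affines. Both presheaves $X\mapsto\QStk(X)$ --- a sheaf by construction, being a right Kan extension from affines --- and $X\mapsto\Mod_{\QCoh(X)}(\Pr^{\textup{L},\otimes}_{\textup{St}})$ --- a sheaf by the step just established --- agree on affines and are related by the global sections functor, so its value on $X$ may be computed as the \v{C}ech limit over any affine cover. Choosing a finite affine cover of $X$ with quasi-compact separated intersections and inducting on the number of charts --- the base case a single affine, the inductive step a Mayer--Vietoris square whose deeper strata involve strictly fewer charts and are ultimately affine --- shows the comparison is an equivalence on every qcqs scheme. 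For the symmetric monoidal refinement I would run the whole argument internally to symmetric monoidal $\infty$-categories: the limit defining $\QStk(X)$ is a limit of symmetric monoidal categories along the (symmetric monoidal) base-change functors, $\Mod_{\QCoh(X)}$ is symmetric monoidal for $\otimes_{\QCoh(X)}$, and the global sections functor is symmetric monoidal because base change is; a symmetric monoidal functor that is an equivalence of underlying $\infty$-categories is a symmetric monoidal equivalence, which yields the claim. I expect the main obstacle to be the second descent input, namely that $\Mod_{(-)}$ commutes with the descent limit; the prestable case is the more delicate, since one must check that the limit of Grothendieck prestable categories along $U_\bullet$ remains Grothendieck and agrees with $\QCoh(X)_{\geq 0}$-modules, which is exactly where compatibility of the glued t-structure with filtered colimits (\cref{proposition-Grothendieck-cats}) and the behavior of connective covers under pullback become essential.
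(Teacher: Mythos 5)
The paper does not actually prove this theorem: it cites it, deducing the stable case from \cite[Proposition 6.5]{DAG-XI} and the prestable case from \cite[Theorem 10.2.0.2]{SAG} together with \cite[Theorem D.5.3.1]{SAG}, and obtaining symmetric monoidality from the fact that the \emph{inverse} of the global sections functor (the localization functor) is strong monoidal. Your proposal instead sketches the underlying argument from scratch, and its skeleton --- trivial affine base case, descent along affine covers, induction on the number of charts of a finite affine cover --- is indeed the skeleton of the proofs in those references. So the route is compatible with the paper's, but you are re-deriving the cited results rather than quoting them, and that is where the problem lies.

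The genuine gap is in your ``second descent input.'' The assertion that $\cc\mapsto\Mod_{\cc}(\Pr^{\textup{L},\otimes}_{\textup{St}})$ carries the limit presentation $\QCoh(X)\simeq\lim_{[n]}\QCoh(U_n)$ to the limit of module categories is not a consequence of general descent machinery: it is precisely the statement of $1$-affineness of $X$, i.e.\ the theorem itself, and it fails for general geometric objects (non-$1$-affine stacks), so it cannot be ``assembled'' from descent for $\QCoh$ plus a formal commutation of $\Mod_{(-)}$ with limits. Moreover, \cite[Theorem D.5.3.1]{SAG} does not supply it: that theorem gives fpqc descent for the functor $R\mapsto\Mod_{\Mod_R}(\Groth_\infty^\otimes)$ on \emph{affines}, which is what makes $\QStkpre$ well defined as a right Kan extension and identifies it over affine schemes, but it says nothing about comparing $\QStkpre(X)$ with $\Mod_{\QCoh(X)_{\geq 0}}(\Groth_\infty^\otimes)$ when $X$ is not affine. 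The Mayer--Vietoris inductive step you invoke is exactly where the real work happens, and it needs a specific input you do not name: that for a quasi-compact open $U\subset X$ the restriction $\QCoh(X)\to\QCoh(U)$ is a localization with fully faithful, colimit-preserving right adjoint (equivalently, is given by a suitable idempotent/co-localization), which is what lets one glue module categories over the pushout square --- this is the mechanism in Gaitsgory's proof \cite{Gaitsgory-affineness} and in the scallop-decomposition argument of \cite[Chapter 10]{SAG}. As you yourself flag, this is ``the main obstacle,'' and the proposal leaves it unaddressed; as written the argument is circular at that point. A secondary, smaller issue: global sections of a sheaf of symmetric monoidal categories is a priori only lax monoidal, which is why the paper passes to the strong monoidality of the inverse localization functor; your comparison functor built from base changes effectively does the same, but the step ``the global sections functor is symmetric monoidal because base change is'' should be stated in that direction to be airtight.
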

	\begin{proof} For the stable part, this is \cite[Proposition 6.5]{DAG-XI}. For the prestable part, this is the combination of \cite[Theorem 10.2.0.2]{SAG} and \cite[Theorem D.5.3.1]{SAG}. Symmetric monoidality follows from the fact that the inverse of the global sections functor (the ``localization functor'', see \cite[Section 2.3]{BP}) is strong monoidal.
	\end{proof}
	This theorem means that, if $X$ is a qcqs scheme over $k$, every (Grothendieck pre)stable presentable $\cO_X$-linear $\infty$-category $\cc$ has an associated sheaf of $\infty$-categories on $X$ having $\cc$ as category of global sections. We will make substantial use of this fact in the present work. The main reason why we will always assume our base scheme $X$ to be qcqs is because it makes this theorem hold.
	\begin{defin}\label{defin-prestable-stable-linear}We denote the two right-hand sides of the equivalences in \cref{stack-of-categories} respectively by $$\Lin(X)$$ and $$\Linpre(X).$$\end{defin}
		
	\begin{defin}\label{defin-derived-Brauer}We denote by $\cBr(X)$ the maximal $\infty$-groupoid contained in $\Linpre(X)$ and generated by $\otimes_{\QCoh(X)_{\geq 0}}$-invertible objects which are compactly generated categories, and equivalences between them. We denote by $\cBre(X)$ the maximal $\infty$-groupoid contained in $\Lin(X)$ generated by $\otimes_{\QCoh(X)}$-invertible objects which are compactly generated categories, and equivalences between them.
	
	We call $\Br(X):=\pi_0\cBr(X)$ the \textit{derived Brauer group} of $X$ and $\Bre(X):=\pi_0(\cBre(X))$ the \textit{extended derived Brauer group} of $X$.\footnote{Our notation here slightly differs from the one used in \cite[Definition 11.5.2.1, Definition 11.5.7.1]{SAG}, in that we write $\Br(X),\Bre(X)$ in place of $\textup{Br}(X),\textup{Br}^\dagger(X)$ in order to avoid confusion with the classical Brauer group. For the same reason, we introduce the terminology ``derived Brauer group'' (resp. ``extended derived Brauer group'') which interpolates between the one used by Lurie and the one appearing in \cite[Definition 2.14]{Toen-Azumaya}. Indeed, our extended derived Brauer group $\Bre(X)$ is the same as what To\"en in \textit{loc. cit.} calls $\textup{dBr}_{\textup{cat}}(X)$, the \textit{derived categorical Brauer group} of $X$, whereas there is no notation corresponding to $\Br(X)$ in To\"en's paper.}
	\end{defin}
	
	By \cite[Theorem 10.3.2.1]{SAG}, to be compactly generated is a property which satisfies descent. The same is true for invertibility, since the global sections functor is a symmetric monoidal equivalence by \cref{stack-of-categories}.

 \begin{thm}[{\cite[Example 11.5.7.15 and Example 11.5.5.5]{SAG}}]\label{cohomological-interpretation-brauer}
     Let $X$ be a qcqs scheme. There are equivalences of spaces $$\cBr(X)\simeq \Map(X, \tK(\Gm, 2))$$ and $$\cBre(X)\simeq \Map(X,\tK(\Gm,2)\times \tK(\Z, 1))$$ and therefore bijections $$\Br(X)\leftrightarrow \tH^2(X,\Gm)$$
     $$\Bre(X)\leftrightarrow \tH^2(X,\Gm)\times \tH^1(X,\Z),$$ which can be promoted to isomorphisms of abelian groups.
 \end{thm}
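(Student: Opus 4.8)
The plan is to realize both sides as global sections of sheaves of spaces on the small étale site of $X$ and to identify these sheaves by computing their homotopy sheaves. First I would let $U\mapsto \cBr(U)$ and $U\mapsto \cBre(U)$ range over $U\in X_\ett$. By \cref{stack-of-categories} the assignments $U\mapsto \Linpre(U)$ and $U\mapsto \Lin(U)$ are sheaves of symmetric monoidal $\infty$-categories, and since both invertibility and compact generation are étale-local conditions (as recalled right after \cref{defin-derived-Brauer}), the maximal sub-$\infty$-groupoids on invertible compactly generated objects also glue. This produces sheaves of spaces $\underline{\cBr}$ and $\underline{\cBre}$ whose global sections are $\cBr(X)$ and $\cBre(X)$. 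The symmetric monoidal structure makes these Picard/Brauer objects, hence connective spectra (grouplike $\E_\infty$-sheaves); in particular the eventual equivalences will be maps of connective spectra, which is exactly what promotes the $\pi_0$-bijections to isomorphisms of abelian groups. Finally, since every invertible compactly generated linear category is étale-locally equivalent to the unit (the local triviality underlying the descent already invoked in the introduction), both sheaves are connected and pointed by the unit.

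Having reduced to connected pointed sheaves, I would compute homotopy sheaves through the loop sheaf, which is the Picard sheaf of the unit: $\Omega\underline{\cBre}\simeq\underline{\cPic}(\QCoh)$ and $\Omega\underline{\cBr}\simeq\underline{\cPic}(\QCoh_{\geq 0})$, so that $\pi_{n+1}$ of the Brauer sheaf is $\pi_n$ of the Picard sheaf. The crucial computation is of these Picard sheaves, and it is here that the prestable and stable cases diverge. Étale-locally on an affine $\Spec A$ every line bundle is trivial, so the invertible objects of $\QCoh$ are exactly the shifts $\cO[n]$, giving $\pi_0\underline{\cPic}(\QCoh)\simeq\underline{\Z}$; whereas in the connective category $\QCoh_{\geq 0}$ a shift $\cO[n]$ with $n\neq 0$ fails to be invertible (its inverse $\cO[-n]$ is not connective), so the only invertible objects are genuine line bundles and $\pi_0\underline{\cPic}(\QCoh_{\geq 0})\simeq 0$. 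In both cases $\pi_1\underline{\cPic}\simeq\underline{\Gm}$, the automorphisms of $\cO$ being the units, and the higher homotopy sheaves vanish because on an affine the derived endomorphisms $R\Gamma(\cO)$ are discrete, so their $\mathrm{GL}_1$ has no homotopy in degrees $\geq 2$ étale-locally.

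Delooping once, $\underline{\cBr}$ has a single nonzero homotopy sheaf $\pi_2\simeq\underline{\Gm}$ and is therefore equivalent to $\tK(\Gm,2)$, while $\underline{\cBre}$ has $\pi_1\simeq\underline{\Z}$ and $\pi_2\simeq\underline{\Gm}$ with all other homotopy sheaves zero. To conclude for the extended case I would show that its Postnikov tower splits: because $\underline{\cBre}$ is a connective spectrum and $\underline{\Z}$ is free, the $k$-invariant relating $\pi_1$ and $\pi_2$ is a stable cohomology operation out of $\underline{\Z}$ raising degree by two, and it vanishes; hence $\underline{\cBre}\simeq\tK(\Z,1)\times\tK(\Gm,2)$. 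Taking global sections over $X$ yields $\cBr(X)\simeq\Map(X,\tK(\Gm,2))$ and $\cBre(X)\simeq\Map(X,\tK(\Gm,2)\times\tK(\Z,1))$; passing to $\pi_0$ gives the asserted bijections with $\tH^2(X,\Gm)$ and $\tH^2(X,\Gm)\times\tH^1(X,\Z)$ (one can check consistency by noting that the Picard group $\tH^1(X,\Gm)$ and the factor $\tH^0(X,\Z)$ reappear as $\pi_1$ via the descent spectral sequence), and the connective-spectrum structure upgrades these to isomorphisms of abelian groups.

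The main obstacle I expect is the homotopy sheaf computation of the two Picard sheaves together with the vanishing of their higher homotopy: one must argue carefully that étale-locally the only invertibles are (possibly shifted) line bundles and that the derived units contribute nothing in degrees $\geq 2$, and, in the prestable case, pin down precisely how the connectivity constraint eliminates the suspension summand responsible for the $\tH^1(X,\Z)$ factor. By contrast the splitting of the extended Brauer sheaf is comparatively soft, and the descent and local-triviality inputs are already available from \cref{stack-of-categories} and the results cited in the introduction.
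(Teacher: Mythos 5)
The paper does not prove this statement itself: it is imported verbatim from \cite[Examples 11.5.5.5 and 11.5.7.15]{SAG}, and the route you take --- sheafify $\cBr$ and $\cBre$ over $X_\ett$ using \cref{stack-of-categories}, note connectedness from local triviality, identify the loop sheaf at the unit with the Picard sheaf, and compute its homotopy sheaves --- is exactly the strategy of To\"en and Lurie that the introduction alludes to (``computes the homotopy group sheaves of a sheaf of spaces $\underline{\cBr}^\dagger_X$''). Your homotopy-sheaf computations are correct, and the prestable half of the argument (where the connectivity constraint kills the shifts, so $\pi_0$ of the connective Picard sheaf vanishes and $\underline{\cBr}\simeq \tK(\Gm,2)$ outright) matches \cref{local-lemma} in the paper.

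There is, however, a genuine gap in your splitting argument for $\underline{\cBre}$. You assert that the $k$-invariant joining $\pi_1=\underline{\Z}$ to $\pi_2=\underline{\Gm}$ ``vanishes because $\underline{\Z}$ is free.'' Freeness of $\Z$ as an abelian group kills higher $\mathrm{Ext}$ in $\Z$-modules, but the $k$-invariant here is a map $\underline{\Z}[1]\to\underline{\Gm}[3]$ of sheaves of spectra on $X_\ett$, i.e.\ an element of $\mathrm{Ext}^2_{X_\ett}(\underline{\Z},\underline{\Gm})\cong \tH^2_\ett(X,\Gm)$ --- which is precisely the cohomological Brauer group you are trying to compute and is generally nonzero. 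So the vanishing is not formal and your stated reason does not establish it. The standard repair is to produce the splitting one level down and then deloop: the map of grouplike $\E_\infty$-sheaves $\underline{\Z}\to\underline{\cPic}(\QCoh)$, $n\mapsto \cO[n]$, splits the truncation $\underline{\cPic}\to\pi_0\underline{\cPic}=\underline{\Z}$, giving $\underline{\cPic}\simeq \underline{\Z}\times \tK(\Gm,1)$; since delooping commutes with products and $\underline{\cBre}$ is connected, $\underline{\cBre}\simeq \B\underline{\cPic}\simeq \tK(\Z,1)\times\tK(\Gm,2)$. With that substitution your argument goes through; note also that the half of the statement the paper actually uses, $\cBr(X)\simeq\Map(X,\tK(\Gm,2))$, is unaffected, since there the Brauer sheaf has a single nonvanishing homotopy sheaf and no splitting is needed.
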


	\begin{rem}\label{t-structures}
	     By See \cite[Remark 11.5.7.3]{SAG}, the stabilization functor mentioned in \cref{rem-product} restricts to a functor $\cBr(X)\to \cBre(X)$, whose homotopy fiber at any object $\cc\in \cBre(X)$ is discrete and can be identified with the collection of all t-structures $(\cc_{\geq 0},\cc_{\leq 0})$ on $\cc$ satisfying the following
conditions:
\begin{itemize}\item The t-structure $(\cc_{\geq 0},\cc_{\leq 0})$ is right complete and compatible with filtered colimits.
\item The Grothendieck prestable $\infty$-category $\cc_{\geq 0}$ is an $\otimes$-invertible object of $\Linpre(X)$.
\item The Grothendieck prestable $\infty$-category $\cc_{\geq 0}$, and consequently its $\otimes$-inverse, are compactly generated\footnote{If a presentable category $\mathcal D$ is dualizable and compactly generated, then its dual can be presented as $\textup{Ind}((\mathcal D^\omega)^\textup{op})$, where $\mathcal D^\omega$ is the subcategory of compact objects. In our case, the inverse is in particular the dual.}.\end{itemize}

However, the stabilization functor is faithful (\cite[Proposition C.3.1.1]{SAG}), and the induced map of groups $$\Br(X)\to \Bre(X)$$ is injective, in that it corresponds via \cref{cohomological-interpretation-brauer} to the injection $$\tH^2_\ett(X,\Gm)\to \tH^2_\ett(X,\Gm)\times\tH^1_\ett(X,\Z)$$ induced by the zero element of $\tH^1_\ett(X,\Z)$. We do not prove this last assertion, which however follows easily from the results in \cite[Section 11.5]{SAG}, since the relationship between the Brauer space and the extended Brauer space from the point of view of gerbes and torsors will be analyzed in a future work. We will never make use of this result in our proofs.
	\end{rem}

\begin{rem}
We now describe the symmetric monoidal structure of $\Lin(X)$ using the language of simplicial colored operads. The same description will apply to the prestable case. This will come out useful in the rest of the paper.
    
The simplicial colored operad $\Lin(X)$ can be described as follows:
\begin{enumerate}
    \item the objects (or colors) are stable presentable $\cO_X$-linear $\infty$-categories;
    \item given $\{ M_i \}_{i \in I}$ a sequence of objects indexed by a finite set $I$ and $N$ another object, the simplicial set of multilinear maps $\textup{Mul}(\{ M_i\}_{i \in I}, N)$ is the full subspace of $\textup{Fun}(\prod_{i \in I}M_i,N)$ spanned by functors between stable presentable $\infty$-categories which are $\QCoh(X)$-linear and preserve small colimits separately in each variable.
\end{enumerate} 
\end{rem}

 \begin{rem}\label{t-structure-tensor}
     Following \cite[Remark C.4.2.2]{SAG}, one can put a t-structure on the tensor product of two presentable categories $\cc$ and $\cD$ each one equipped with a t-structure (and similarly in the $\cO_X$-linear setting). Again by \cite[Remark C.4.2.2]{SAG}, we have the following characterization: $$(\cc\otimes\cD)_{\geq 0}$$ is the smallest full subcategory of $\cc\otimes \cD$ containg $\cc_{\geq 0}\otimes \cD_{\geq 0}$. The analogue statement in the $\cO_X$-linear setting implies that, for $G,G'\to X$ $\Gm$-gerbes over $X$, $$(\QCoh(G)\otimes_{\QCoh(X)}\QCoh(G'))_{\geq 0}\simeq \QCoh(G)_{\geq 0}\otimes_{\QCoh(X)}\QCoh(G')_{\geq 0}.$$
 \end{rem}

 \subsection{The correspondence}\label{outline}
    We now explain in detail the statement of \cref{main-theorem} and offer a sketch of its proof, which will be carried out in \cref{Section-2}.
    
  The construction of the $\infty$-category of twisted sheaves (see \cref{twisted-sheaves}) gives rise to a functor $$\Ger_{\Gm}(X)\to \cBre(X)\subset \Lin(X)$$
	$$G\mapsto \QCoh_{\id}(G).$$ (the fact that it takes values in $\cBre(X)$ is proven both in \cref{id-is-invertible} and \cite[Example 9.3]{Hall-Rydh}).
 
	Under the identifications $$\Ger_{\Gm}(X)\simeq\Map(X,\B^2\Gm)$$ (\cref{gerbes-H2}) and $$\cBre(X)\xrightarrow{\sim} \Map(X, \B^2\Gm\times \B\Z),$$
 (\cite[Example 11.5.5.5]{SAG}) this functor corresponds to a section of the map $$\Map(X, \B^2\Gm\times \B\Z)\to \Map(X,\B^2\Gm).$$ This section is neither fully faithful nor essentially surjective. However, one can observe that $\QCoh_\id(G)$ is not just an $\cO_X$-linear stable $\infty$-category, but also carries a t-structure which is compatible with filtered colimits, since as recalled in \cref{remark-homogeneous} the functor $(-)_\id:\QCoh(G)\to \QCoh(G)$ is t-exact. This additional datum allows to ``correct'' the fact that $\QCoh_\id(-)$ is not an equivalence. 
 
 More precisely, the fact that $\QCoh(G)$ has a functorial t-structure follows from \cite[Example 10.1.6.2, 10.1.7.2]{SAG}. One defines $$\QCoh(G)_{\geq 0}\simeq \lim_{\Spec A\to G}(\Mod_A)_{\geq 0}$$ (the transitions $(A\to B)\mapsto (\Mod_A\to \Mod_B$) are well-defined because $M\mapsto M\otimes B$ is left exact). Then, we define $$\QCoh_\id(G)_{\geq 0}=\QCoh(G)_{\geq 0}\cap \QCoh_\id(G)$$ where the intersection is understood via the fully faithful functor $\QCoh(G)_{\geq 0}\to \QCoh(G)$. Functoriality of this last operation follows from the functorialities made explicit in \cref{equalizer}.
 
 \begin{defin}\label{functor-psi}We denote by $$\Psi:\Ger_{\Gm}(X)\to \Linpre(X)$$ the functor assigning
	$$G\mapsto \QCoh_\id(G)_{\geq 0}.$$\end{defin}

 In analogy to the stable setting, we will prove in \cref{id-is-invertible} that $\Psi$ factors through the (not full) subcategory $\cBr(X)\to \Linpre(X)$.

 	\begin{rem}Given a morphism of gerbes $f:G\to G'$, there are induced functors $$\QCoh(G')\xrightarrow{f^*} \QCoh(G)$$ and $$\QCoh(G)\xrightarrow{f_*}\QCoh(G).$$ We will always use the covariant functoriality. We omit the proof of the fact that $f_*$ sends twisted sheaves to twisted sheaves, which follows from a straightforward computation.
	
	Although there are no real issues with the contravariant notation, there are some technical complications when one wants to prove that a contravariant functor is symmetric monoidal.\footnote{In fact, one would need to use the construction of the ``opposite of an $\infty$-operad'', see \cite{MO-opposite-operad}, \cite[Remark 2.4.2.7]{HA}.} Note that, if $f$ is any morphism of gerbes (that commutes with the banding), then $f$ is an isomorphism, therefore $f_*=(f^*)^{-1}$.
	\end{rem}
	
	 The inverse to $\Psi$ will be described as follows. By \cref{stack-of-categories}, if $X$ is a quasicompact quasiseparated scheme, and $M$ a prestable presentable $\infty$-category equipped with an action of $\QCoh(X)_{\geq 0}$ in $\Pr^{\textup{L}}$, then $M$ is the category of global sections over $X$ of a unique sheaf of prestable presentable $\QCoh(X)_{\geq 0}$-linear categories $\cM$.
	
	\begin{defin}\label{defin-triv}
		Let $X$ be a quasicompact quasiseparated scheme, and $M$ be an element of $\cBr(X)$. Then we define the $\Triv_{\geq 0}(M)$ as the stack $$S \mapsto\Equiv\Big( \QCoh(S)_{\geq 0},\cM(S)\Big)$$ where the right-hand side is the space of equivalences of $\infty$ categories between $\QCoh(S)_{\geq 0}$ and $\cM(S)$. This is a stack because both $\cM$ and $\QCoh(-)_{\geq 0}$ are, and by \cref{stack-of-categories}.
  The functor $$\cBr(X)\to \Stk_{/X}$$
  $$M\mapsto \Triv_{\geq 0}(M)$$ will be denoted by $\Phi$.
	\end{defin}

 We will prove in \cref{triv-gerbe} that $\Phi$ factors through the ``forgetful'' functor $\Ger_{\Gm}(X)\to \Stk_{/X}$ (we will abuse notation and denote the resulting functor again by $\Phi$).
 
 Note that both $\cBr(X)$ and $\Ger_{\Gm}(X)$ have symmetric monoidal structures: on the first one, we have the tensor product of linear $\infty$-categories $-\otimes_{\QCoh(X)_{\geq 0}}-$ (\cref{rem-product}), and on the second one we have the rigidified product $\star$ of $\Gm$-gerbes (\cref{star-product}).
 
As anticipated, our main result is the following:

 \begin{thm}\label{main-theorem}
		Let $X$ be a qcqs scheme over a field $k$. Then there is a symmetric monoidal equivalence of $\infty$-groupoids\footnote{Actually, both sides are 2-truncated, see \cref{2-groupoids}.}
		$$\Phi:\cBr(X)^{\otimes}\xleftrightarrow{\sim} \Ger_{\Gm}(X)^{\mathlarger \star}:\Psi$$
		where
		\begin{itemize}
			\item $\Phi(M)=\Triv_{\geq 0}(M)$
			\item $\Psi(G)=\QCoh_\id(G)_{\geq 0}$.
		\end{itemize}
	\end{thm}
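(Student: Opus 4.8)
The plan is to establish that $\Phi$ and $\Psi$ are mutually inverse equivalences of $\infty$-groupoids, and then to upgrade this to a symmetric monoidal equivalence. First I would verify that both functors are well-defined as claimed: that $\Psi(G) = \QCoh_\id(G)_{\geq 0}$ lands in $\cBr(X)$ (this is \cref{id-is-invertible}, which checks that the connective twisted sheaves form a $\otimes$-invertible compactly generated object), and that $\Phi(M) = \Triv_{\geq 0}(M)$ is not merely a stack but a $\Gm$-gerbe (this is \cref{triv-gerbe}). For the latter, I would check the gerbe axioms \'etale-locally: since $M$ is invertible, \'etale descent (\cref{stack-of-categories}) guarantees that $M$ is locally equivalent to the unit $\QCoh(X)_{\geq 0}$, so $\Triv_{\geq 0}(M)$ is locally nonempty; connectedness follows because any two local trivializations differ by an autoequivalence of $\QCoh(-)_{\geq 0}$, and the key computation is that the automorphism group (inertia) of such a trivialization is precisely $\Gm$ --- i.e.\ $\Aut(\QCoh(S)_{\geq 0}) \simeq \B\Gm(S)$ as group stacks --- which furnishes the canonical banding.

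Next I would prove that the composites $\Phi\circ\Psi$ and $\Psi\circ\Phi$ are equivalent to the respective identities. For $\Phi\circ\Psi$, given a gerbe $G$, I need a natural equivalence $\Triv_{\geq 0}(\QCoh_\id(G)_{\geq 0}) \simeq G$; the natural comparison map sends a local section $s: S \to G$ (which trivializes the gerbe, identifying $G|_S \simeq \B\Gm \times S$) to the induced equivalence $\QCoh_\id(G)_{\geq 0}|_S \simeq \QCoh_\id(\B\Gm\times S)_{\geq 0} \simeq \QCoh(S)_{\geq 0}$, the last step because the identity-homogeneous connective sheaves on the trivial gerbe are just connective sheaves on $S$ twisted by the weight-one line bundle. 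One checks this map is an equivalence \'etale-locally and that it respects bandings. For $\Psi\circ\Phi$, given $M\in\cBr(X)$, I would use that $\Triv_{\geq 0}(M)$ comes with a tautological equivalence between $M$ and the connective twisted sheaves of its own trivialization gerbe, again verified locally via descent and then globalized by \cref{stack-of-categories}.

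The symmetric monoidal upgrade is, as the excerpt signals via \cref{external-product-theorem} and \cref{rigidification-pullback-theorem}, the most substantial part and is where I expect the main obstacle to lie. The strategy is to promote $\Psi$ to a morphism of $\infty$-operads, using the simplicial colored operad descriptions of both $\Ger_{\Gm}(X)^{\mathlarger\star}$ (\cref{star-product}) and $\Linpre(X)$. Concretely, a multilinear map in the gerbe operad is a morphism $\prod_{i\in I} G_i \to \cH$ of gerbes whose banding is the $n$-fold multiplication of $\Gm$; I must produce from it a colimit-preserving multilinear functor $\prod_i \QCoh_\id(G_i)_{\geq 0} \to \QCoh_\id(\cH)_{\geq 0}$, functorially in the operad structure. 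The first step (\cref{external-product-theorem}) is to show that the external product of twisted sheaves realizes $\QCoh_{(\id,\id)}(G_1\times_X G_2)_{\geq 0} \simeq \QCoh_\id(G_1)_{\geq 0} \otimes \QCoh_\id(G_2)_{\geq 0}$, using \cref{t-structure-tensor} to control the connective part of the tensor product and \cref{bi-homogeneous} to identify the bihomogeneous component. The second step (\cref{rigidification-pullback-theorem}) is to show that pushforward along the multiplication-induced rigidification map $m: G_1\times_X G_2 \to G_1\star G_2$ sends the $(\id,\id)$-homogeneous connective sheaves to the $\id$-homogeneous connective sheaves, i.e.\ $\QCoh_\id(G_1\star G_2)_{\geq 0} \simeq \QCoh_{(\id,\id)}(G_1\times_X G_2)_{\geq 0}$. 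The genuine difficulty is coherence: assembling these equivalences into a map of $\infty$-operads (and not merely a compatible family of equivalences on underlying objects) requires care with the homogeneous-component functors $(-)_\chi$ and their t-exactness, and tracking that the associativity and symmetry constraints of $\star$ match those of $\otimes$ under the comparison. Once $\Psi$ is symmetric monoidal and is an equivalence on underlying $\infty$-groupoids, its inverse $\Phi$ automatically inherits a symmetric monoidal structure, completing the proof.
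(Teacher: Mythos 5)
Your proposal follows the same overall architecture as the paper (well-definedness of both functors via \cref{id-is-invertible} and \cref{triv-gerbe}, the operadic upgrade of $\Psi$ via \cref{external-product-theorem} and \cref{rigidification-pullback-theorem}, and a natural equivalence $\Phi\circ\Psi\simeq\id$ built from the identification $G(S)\simeq\Equiv_{\Ger_{\Gm}(S)}(S\times\BGm,G_S)$), but it deviates in how the equivalence is closed. The paper does \emph{not} construct a unit $\Psi\circ\Phi\simeq\id$; instead it proves that $\Phi$ is fully faithful (\cref{Triv-is-ff}), using the symmetric monoidal structure on $\Phi$ (\cref{symm-mon}) to reduce the computation of mapping spaces to the unit object, where it becomes the identification $\Pic(X)\simeq\Map(X,\BGm)$. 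Combined with $\Phi\Psi\simeq\id$, this yields the equivalence. Your route --- proving both composites are identities and then letting $\Phi$ inherit monoidality automatically as the inverse of the symmetric monoidal equivalence $\Psi$ --- is logically sound and even spares you the direct proof that $\Phi$ is monoidal; the paper needs \cref{symm-mon} anyway precisely because its fully-faithfulness argument depends on it. The trade-off is that your step ``$\Triv_{\geq 0}(M)$ comes with a tautological equivalence between $M$ and the connective twisted sheaves of its own trivialization gerbe'' is the one place where real work is hidden: producing a \emph{natural} comparison $M\to\QCoh_\id(\Triv_{\geq 0}(M))_{\geq 0}$ requires exhibiting the universal trivialization over the gerbe as a twisted object and descending it along $\Triv_{\geq 0}(M)\to X$ --- essentially the classical ``Azumaya algebras from twisted sheaves'' reconstruction --- and ``verified locally via descent and then globalized'' does not yet specify what the global comparison functor is before one checks it locally. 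The paper's detour through fully faithfulness avoids exactly this construction. Everything else in your sketch (the local computation $\Aut(\QCoh(S)_{\geq 0})\simeq\BGm(S)$ furnishing the banding, the two-step factorization of the monoidal structure through $G\times_XG'$ and the rigidification $G\star G'$, and the coherence concerns) matches the paper's argument.
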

 As a corollary of the proof, we also have that 
 \begin{cor}[{\cref{Conjecture-holds}}]
     Conjecture 5.27 in \cite{BP} is true.
 \end{cor}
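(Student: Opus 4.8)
The plan is to prove the four assertions packaged in the statement in turn: that $\Phi$ lands in $\Gm$-gerbes, that $\Psi$ lands in $\cBr(X)$, that the two functors are mutually inverse, and finally that $\Psi$ (hence its inverse $\Phi$) is symmetric monoidal. The conceptual heart of the first three points is a single computation: the automorphism group stack of the unit object $\mathbf 1:=\QCoh(X)_{\geq 0}$ in $\Linpre(X)$ is
$$\underline{\Aut}_{\Linpre(X)}(\mathbf 1)\simeq \B\Gm$$
as a sheaf of group stacks on $X_{\ett}$. Indeed, automorphisms of $\mathbf 1$ are given by its Picard groupoid of $\otimes$-invertible connective objects; connectivity forbids shifts, so the $\pi_0$ of this sheaf is the (sheafified, hence trivial) sheaf of line bundles and its $\pi_1$ is $\Gm$. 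Granting this, $\Triv_{\geq 0}(M)$ — which, once $M$ is invertible and hence étale-locally equivalent to $\mathbf 1$, is by \cref{stack-of-categories} a torsor under $\underline{\Aut}(\mathbf 1)$ — is exactly a torsor under $\B\Gm$, i.e. a $\Gm$-gerbe; the banding $\Gm\simeq\alpha_*\cI_{\Triv_{\geq 0}(M)}$ is induced by the identification of $\pi_1(\underline{\Aut}(\mathbf 1))$ with $\Gm$ via scalar multiplication, which is the \emph{identity} character (this is \cref{triv-gerbe}). Dually, $\Psi$ is well-defined because $\Gm$-gerbes are étale-locally trivial and $\QCoh_\id(\B\Gm\times X)_{\geq 0}\simeq\QCoh(X)_{\geq 0}=\mathbf 1$, so $\Psi(G)$ is étale-locally the unit; since both compact generation and $\otimes$-invertibility satisfy étale descent, $\Psi(G)\in\cBr(X)$ (this is \cref{id-is-invertible}).

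For the counit $\Phi\Psi\Rightarrow\id$ I would construct a morphism of $\Gm$-gerbes $G\to\Triv_{\geq 0}(\QCoh_\id(G)_{\geq 0})$: a section of $G$ over $S\to X$ trivializes $G|_S\simeq\B\Gm\times S$, hence produces, via the local computation above, an equivalence $\QCoh_\id(G)_{\geq 0}|_S\simeq\QCoh(S)_{\geq 0}$, i.e. a section of the target. One checks this is compatible with the bandings; then, because every morphism of $\Gm$-gerbes is automatically an equivalence (\cite[Lemma 12.2.4]{Olsson}), it is an isomorphism. For the unit $\id\Rightarrow\Psi\Phi$, set $G:=\Triv_{\geq 0}(M)$ with $\alpha:G\to X$. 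The tautological equivalence over $G$ trivializing $\alpha^*\cM$ is, by the way the banding was defined, equivariant for the inertial $\Gm$-action through the identity character; descent along $\alpha$ (legitimate by \cref{stack-of-categories}) then exhibits a canonical $\cO_X$-linear functor $M\to\QCoh_\id(G)_{\geq 0}$ identifying $M$ with the weight-one component of the trivialized pullback. Since both source and target satisfy étale descent and the functor is the identity of $\QCoh(S)_{\geq 0}$ over a trivial gerbe, it is a global equivalence, again by \cref{stack-of-categories}.

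It remains to upgrade $\Psi$ to a symmetric monoidal functor, which I would do using the operadic descriptions of $\cBr(X)^{\otimes}$ and $\Ger_{\Gm}(X)^{\mathlarger\star}$ recalled in \cref{star-product}. This factors through two identifications. First, an external-product (Künneth) equivalence
$$\QCoh_\id(G_1)_{\geq 0}\otimes_{\QCoh(X)_{\geq 0}}\QCoh_\id(G_2)_{\geq 0}\;\simeq\;\QCoh_{(\id,\id)}(G_1\times_X G_2)_{\geq 0},$$
obtained from the base-change equivalence $\QCoh(G_1)\otimes_{\QCoh(X)}\QCoh(G_2)\simeq\QCoh(G_1\times_X G_2)$, the observation that the box product of identity-homogeneous sheaves is $(\id,\id)$-homogeneous, and the compatibility of the t-structure with $\otimes$ from \cref{t-structure-tensor} (this is \cref{external-product-theorem}). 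Second, a rigidification equivalence
$$\QCoh_{(\id,\id)}(G_1\times_X G_2)_{\geq 0}\;\simeq\;\QCoh_\id(G_1\star G_2)_{\geq 0},$$
coming from the rigidification map $G_1\times_X G_2\to G_1\star G_2=m_*(G_1\times_X G_2)$ along $m:\Gm\times\Gm\to\Gm$, under which $(\id,\id)=\id\circ m$-homogeneous sheaves match $\id$-homogeneous ones (this is \cref{rigidification-pullback-theorem}). Composing gives $\Psi(G_1)\otimes\Psi(G_2)\simeq\Psi(G_1\star G_2)$. Together these two theorems supply exactly the pointwise data proving \cite[Conjecture 5.27]{BP} (our \cref{Conjecture-holds}); promoting them to a morphism of $\infty$-operads amounts to checking that, arity by arity, the multilinear maps $\textup{Mul}(\{G_i\},\cH)$ on the gerbe side — those whose banding is the multiplication of $\Gm$ — are carried to $\QCoh(X)_{\geq 0}$-linear colimit-preserving multifunctors, which the external product construction does naturally.

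The main obstacle, I expect, is precisely this last coherence: the two displayed equivalences are easy as statements about objects, but assembling them into a single $\infty$-operadic functor — compatibly for all arities and with the associativity and symmetry constraints — requires genuine operadic bookkeeping, tracking the coCartesian fibrations over $\Fin_*$ built in \cref{star-product} through both the Künneth and the rigidification steps. This is the substance of the conjecture of Binda and Porta, and is the reason the monoidal part is separated into \cref{external-product-theorem} and \cref{rigidification-pullback-theorem}. A secondary delicate point is the bookkeeping of connectivity: at each stage one must verify that the relevant functors ($(-)_\id$, $\alpha_*$, the rigidification pushforward, and $\otimes$) are t-exact or at least compatible with the chosen t-structures, so that passage to connective parts commutes with every construction — this is exactly what pins down the \emph{derived} (connective) Brauer group $\cBr(X)$, rather than the extended group $\cBre(X)$, as the correct source.
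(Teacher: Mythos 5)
Your deduction of \cite[Conjecture 5.27]{BP} is exactly the paper's: the conjecture follows by composing the external-product (K\"unneth) equivalence of \cref{external-product-theorem} with the rigidification equivalence of \cref{rigidification-pullback-theorem}, which is precisely how \cref{Conjecture-holds} is justified. The remainder of your write-up (gerbe structure on $\Triv_{\geq 0}$, invertibility of $\Psi(G)$, unit and counit, operadic coherence) reproduces the paper's proof of the full \cref{main-theorem} and is not needed for this particular corollary, but it is consistent with the arguments given there.
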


 We emphasize that symmetric monoidality is somehow a special feature of the prestable setting. Indeed, the analogous equivalence $\cBre(X)\simeq \Map(X,\B^2\Gm\times \B\Z)$ is not symmetric monoidal, although it becomes an isomorphism of abelian groups after passing to $\pi_0$ (\cite[Remark 11.5.5.4,11.5.5.5]{SAG}). Moreover, we use the symmetric monoidal structures on the two functors in a substantial way in order to prove other properties.

\

 \cref{Section-2} is devoted to the proof of \cref{main-theorem}. Here is a sketch of our arguments.
 
 First, we establish the sought symmetric monoidal structure on $\Psi$ (\cref{id-is-invertible}). We do this by considering the $\infty$-category of bi-homogeneous sheaves over $G\times_X G'$ in the sense of \cref{bi-homogeneous} and proving that the external tensor product establishes an equivalence between $\QCoh_\chi(G)\otimes_{\QCoh(X)}\QCoh_{\chi'}(G')\simeq \QCoh_{(\chi,\chi')}(G\times_XG')$ (\cref{external-product-theorem}). This is done by reducing to the case of trivial gerbes (i.e. gerbes of the form $\BGm\times X$) and by using the fact that $\QCoh(\BGm)$ is compactly generated. Then we prove that, in the special case of $\chi=\chi'=\id$, the universal (``rigidification'') map $\rho:G\times_X G'\to G\star G'$ induces an equivalence $\QCoh_{(\id,\id)}(G\times_XG')\simeq \QCoh_\id(G\star G')$ (\cref{rigidification-pullback-theorem}): the fact that $\rho^*$ sends twisted sheaves to $(\id,\id)$-homogeneous sheaves is a direct computation using the universal property of the $\star$-product and the behaviour of the line bundles $\cL_\id, \cL_{(\id,\id)}$, whereas the fact that it is an equivalence is checked \'etale-locally.
 
The above equivalences are both t-exact, because $\rho:G \times_X G'\to G\star G'$ is representable and flat and thus one can apply t-exactness of $\rho^*$ on quasicoherent sheaves. Therefore, all results can be rephrased in the connective setting, i.e. for $\QCoh_\id(G)_{\geq 0}$.
In particular, since any $\Gm$-gerbe $G$ is $\star$-invertible, the existence of a symmetric monoidal structure on $\Psi$ implies that $\QCoh_\id(G)_{\geq 0}$ belongs to $\cBr(X)$.

Together, \cref{external-product-theorem} and \cref{rigidification-pullback-theorem} prove \cite[Conjecture 5.27]{BP}.

We then pass to proving that, for any $M\in \cBr(X)$, $\Phi(M)$ is indeed a $\Gm$-gerbe (\cref{triv-gerbe}), which follows essentially by working \'etale-locally on $X$ and inspecting $\QCoh(X)_{\geq 0}$-linear automorphisms of $\QCoh(X)_{\geq 0}$ itself. We also establish a symmetric monoidal structure on $\Phi$ (\cref{symm-mon}), which in this case follows directly from the universal properties defining $\otimes$ and $\star$.
In \cref{section-equivalence}, we conclude the proof of the main theorem. We start by proving that $\Phi$ is fully faithful (\cref{Triv-is-ff}), which amounts again to a computation of automorphisms and uses the symmetric monoidal structure on $\Phi$ to easily reduce to working with the unit of $\cBr(X)$, i.e. $\QCoh(X)_{\geq 0}$. Finally, we prove that there is a natural equivalence $$\textup{Id}_{\Ger_{\Gm}(X)}\Rightarrow \Phi\Psi.$$ The existence of a natural transformation is a consequence of the definitions, while the fact that it is an equivalence follows again by \'etale-local arguments.
	
\section{Study of the derived Brauer map}\label{Section-2}

 \subsection{Derived categories of twisted sheaves}\label{QCoh-id-cn-is-invertible}

	Our aim in this subsection is to prove the following statement:
	\begin{thm}\label{id-is-invertible}Let $X$ be a qcqs scheme. The functors $$\QCoh_\id(-):\Ger_{\Gm}(X)^{\mathlarger\star}\to \Lin(X)^\otimes$$ and $$\QCoh_\id(-)_{\geq 0}:\Ger_{\Gm}(X)^{\mathlarger\star}\to \Linpre(X)^{\otimes}$$ carry a symmetric monoidal structure with respect to the $\star$-symmetric monoidal structure on the left hand side and to the $\otimes$-symmetric monoidal structures on the right hand sides. In particular, since every $\Gm$-gerbe is $\star$-invertible and $\Ger_{\Gm}(X)$ is a $2$-groupoid, $\QCoh_\id(-)$ takes values in $\cBre(X)$ and $\QCoh_\id(-)_{\geq 0}$ takes values in $\cBr(X)$.
	\end{thm}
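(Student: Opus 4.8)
The plan is to realize the sought functor as the operadic nerve of a map of fibrant simplicial colored operads $\cG\to \Lin(X)$, using the presentations recalled in \cref{star-product} and in the remark describing $\Lin(X)$ as a colored operad. Both operads are fibrant (their multimorphism simplicial sets are Kan complexes), so such a map induces a map of $\infty$-operads over $\Fin_*$ by the operadic nerve. On colors I send a $\Gm$-gerbe $G$ to $\QCoh_\id(G)$, which is already functorial in $G$ by \cref{equalizer}. The content is the assignment on multimorphisms: a multimorphism in $\cG$ is a morphism of gerbes $\prod_{i\in I}G_i\to \cH$ whose banding is the $|I|$-fold multiplication of $\Gm$, and by the universal property of the rigidified product (\cref{star-product}) it factors as the canonical rigidification onto the $|I|$-fold $\star$-product followed by an isomorphism of $\Gm$-gerbes. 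To such data I associate the multilinear functor given by the iterated external tensor product into $\QCoh_{(\id,\ldots,\id)}(\prod_{i\in I}G_i)$ (which is bilinear, i.e.\ $\QCoh(X)$-linear and colimit-preserving separately in each variable), followed by the inverse of the equivalence $\rho^*$ supplied by \cref{rigidification-pullback-theorem}, followed by pushforward along the residual isomorphism; the composite lands in $\QCoh_\id(\cH)$.

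Granting a map of operads, the induced map of $\infty$-operads is automatically lax symmetric monoidal, and it is strong precisely when the comparison maps are equivalences. For the binary operation the comparison map is, by the universal property of $\otimes$, exactly the composite
$$\QCoh_\id(G_1)\otimes_{\QCoh(X)}\QCoh_\id(G_2)\xrightarrow{\ \sim\ }\QCoh_{(\id,\id)}(G_1\times_X G_2)\xrightarrow{\ \sim\ }\QCoh_\id(G_1\star G_2),$$
whose first arrow is the external-product equivalence \cref{external-product-theorem} (for $\chi=\chi'=\id$) and whose second arrow is $(\rho^*)^{-1}$ from \cref{rigidification-pullback-theorem}. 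As both are equivalences, the comparison map is an equivalence and the functor is strong symmetric monoidal. The unit is handled by the same computation applied to the trivial gerbe, giving $\QCoh_\id(\BGm\times X)\simeq\QCoh(X)$, the unit of $\Lin(X)$.

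The main obstacle is not either of these two equivalences in isolation (they are assumed), but the verification that the multimorphism assignment is honestly a map of colored operads, i.e.\ that it respects operadic composition and the symmetric-group actions rather than being a mere collection of pointwise equivalences. This reduces to a list of coherences: associativity and symmetry of the fiber product $\times_X$ and of the external product $\boxtimes$; associativity and symmetry of the multiplication $m:\Gm^{\times n}\to\Gm$ together with the universal property of rigidification, which furnish canonical identifications $(G_1\star G_2)\star G_3\simeq m^{(3)}_*(G_1\times_X G_2\times_X G_3)\simeq G_1\star(G_2\star G_3)$; and the compatibility of all of these with the flat pullbacks $\rho^*$. Each is a naturality statement for genuinely functorial operations, so the coherences hold, but assembling them into a strict map of simplicial colored operads is the bookkeeping-heavy step.

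For the prestable statement I note that both equivalences above are t-exact: the external product is t-exact by the description of the tensor-product t-structure in \cref{t-structure-tensor}, and $\rho^*$ is t-exact because $\rho$ is representable and flat. Hence the entire map of operads restricts to the connective subcategories, yielding the symmetric monoidal structure on $\QCoh_\id(-)_{\geq 0}$. Finally, for the last assertion, every $\Gm$-gerbe is $\star$-invertible and $\Ger_{\Gm}(X)$ is a $2$-groupoid, so a symmetric monoidal functor carries each $G$ to a $\otimes$-invertible object; combined with the fact that $\QCoh_\id(G)$ (and its connective part) is compactly generated, being a retract of the compactly generated $\QCoh(G)$ and admitting a compact generator as noted after \cref{twisted-sheaves}, this places $\QCoh_\id(G)$ in $\cBre(X)$ and $\QCoh_\id(G)_{\geq 0}$ in $\cBr(X)$.
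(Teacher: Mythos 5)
Your proposal is correct and follows essentially the same route as the paper: the paper also defines the symmetric monoidal structure by extending $\QCoh_\id(-)$ to a map of (fibrant) simplicial colored operads, sending a multimorphism $f:\prod_{i\in I}G_i\to \cH$ to $f_*\circ\boxtimes^I$ (which coincides with your ``rigidify, then $(\rho^*)^{-1}$, then push forward'' composite, since $\rho_*=(\rho^*)^{-1}$), invoking \cref{external-product-theorem} and \cref{rigidification-pullback-theorem} to see that coCartesian edges go to coCartesian edges, and deducing the prestable case from t-exactness. The only cosmetic differences are that you make the factorization through the rigidification and the compact-generation point explicit, which the paper leaves implicit.
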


	We will prove \cref{id-is-invertible} in two different steps, \cref{external-product-theorem} and \cref{rigidification-pullback-theorem}, which also prove \cite[Conjecture 5.27]{BP}.

	\begin{prop}\label{external-product-theorem}Let $X$ be a quasicompact quasiseparated scheme over a field $k$.	Let $G,G'\to X$ be two $\Gm$-gerbes over $X$, and $\chi, \chi'$ two characters of $\Gm$. The external tensor product establishes a t-exact equivalence $$\boxtimes: \QCoh_\chi(G)\otimes_{\QCoh(X)}\QCoh_{\chi'}(G')\xrightarrow{\sim}\QCoh_{(\chi,\chi')}(G\times_XG')$$ where $G\times_XG'$ is seen as a $\Gm\times \Gm$-gerbe on $X$.
	\end{prop}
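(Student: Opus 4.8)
The plan is to build the functor $\boxtimes$ on the ambient categories of quasicoherent sheaves, cut it down to the homogeneous summands via the idempotents of \cref{remark-homogeneous} and \cref{bi-homogeneous}, and verify that it is an equivalence by checking this étale-locally on $X$, where both gerbes trivialize. Concretely, the external product $\cF\boxtimes\cF':=\pr_G^*\cF\otimes \pr_{G'}^*\cF'$ assembles into a $\QCoh(X)$-linear colimit-preserving functor $\QCoh(G)\otimes_{\QCoh(X)}\QCoh(G')\to \QCoh(G\times_XG')$. All four categories in sight are global sections of sheaves of $\cO_X$-linear categories: the ambient ones by \cref{stack-of-categories}, and the homogeneous ones as images of the t-exact idempotents $(-)_\chi$, $(-)_{(\chi,\chi')}$, hence as summands, hence again sheaves. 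Since $\boxtimes$ and these idempotents commute with flat base change on $X$, the whole diagram is a morphism of sheaves of categories, and being an equivalence is a local property (descent, using that both sides are sheaves). I would therefore pass to an étale cover $U\to X$ trivializing both $G$ and $G'$, reducing to $G=G'=\BGm\times X$ and $G\times_XG'=\B(\Gm\times\Gm)\times X$.

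For the trivial gerbe the weight decomposition of \cref{remark-homogeneous} identifies $\QCoh_\chi(\BGm\times X)\simeq \QCoh(X)$ for each character $\chi$, the equivalence sending $\cM$ to $L_\chi\boxtimes\cM$; likewise $\QCoh_{(\chi,\chi')}(\B(\Gm\times\Gm)\times X)\simeq \QCoh(X)$, whose generating weight line bundle is $L_{(\chi,\chi')}=L_\chi\boxtimes L_{\chi'}$ as in \cref{bi-homogeneous}. Both sides of the claim then become $\QCoh(X)\otimes_{\QCoh(X)}\QCoh(X)\simeq\QCoh(X)$, and I only need that $\boxtimes$ realizes this identification. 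To make this rigorous rather than generator-by-generator, I would first establish the full Künneth equivalence $\QCoh(\BGm\times X)\otimes_{\QCoh(X)}\QCoh(\BGm\times X)\simeq \QCoh(\B(\Gm\times\Gm)\times X)$, which holds because $\QCoh(\BGm)$ is compactly generated by the line bundles $\{L_\chi\}$, so the relative tensor product is computed on compact objects. Under this equivalence the external idempotent $(-)_\chi\otimes(-)_{\chi'}$ corresponds to $(-)_{(\chi,\chi')}$: this is a direct check using base change for $\pr_i^*$ and $p_*$ together with the factorization $L_{(\chi,\chi')}=L_\chi\boxtimes L_{\chi'}$. Passing to the images of these idempotents, functorially by the machinery of \cref{equalizer}, yields the desired equivalence on homogeneous subcategories over $U$; descending back to $X$ gives the global statement.

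For t-exactness I would invoke \cref{t-structure-tensor}: $(\QCoh_\chi(G)\otimes_{\QCoh(X)}\QCoh_{\chi'}(G'))_{\geq 0}$ is generated under colimits by $\QCoh_\chi(G)_{\geq 0}\otimes\QCoh_{\chi'}(G')_{\geq 0}$. Since the projections $\pr_G,\pr_{G'}$ are flat so that $\pr_i^*$ is t-exact, and the tensor of connective sheaves is connective, $\boxtimes$ sends these generators into $\QCoh_{(\chi,\chi')}(G\times_XG')_{\geq 0}$; combined with the fact that $\boxtimes$ is already an equivalence, this forces it to match connective parts on both sides, giving t-exactness.

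The main obstacle is the distributivity of $\otimes_{\QCoh(X)}$ over the infinite weight decomposition $\QCoh(\BGm)\simeq\prod_\chi\QCoh(X)$. Because product and coproduct of presentable categories over an infinite index set differ in general, one cannot freely commute the relative tensor product past the decomposition; this is exactly what compact generation of $\QCoh(\BGm)$ repairs, by reducing the computation to compact generators (finite sums of the $L_\chi$), where distributivity is automatic, the infinite case then following by $\textup{Ind}$-completion. The remaining care is bookkeeping: checking that the local identification of idempotents over $U$ is compatible with the gerbe descent data, so that the equivalence genuinely descends to the morphism of sheaves on $X$.
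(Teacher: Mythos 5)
Your proposal is correct and follows essentially the same route as the paper: reduce to an étale-local statement via \cref{stack-of-categories}, use the Künneth equivalence for $\QCoh(\BGm)$ (compact generation by the $L_\chi$), identify the external idempotent $(-)_\chi\otimes(-)_{\chi'}$ with $(-)_{(\chi,\chi')}$ through base change and the factorization $L_{(\chi,\chi')}=L_\chi\boxtimes L_{\chi'}$, restrict via the machinery of \cref{equalizer}, and deduce t-exactness from \cref{t-structure-tensor}. The only organizational difference is that the paper carries out the idempotent comparison $(\cF\boxtimes\cF')_{(\chi,\chi')}\simeq(\cF)_\chi\boxtimes(\cF')_{\chi'}$ globally by a direct chain of equivalences before localizing, while you perform it over the trivializing cover; both are fine.
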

	\begin{proof}

		First we prove that the external tensor product induces a t-exact equivalence \begin{equation}\label{external-tens}\boxtimes:\QCoh(G)\otimes_{\QCoh(X)}\QCoh(G')\simeq \QCoh(G\times_XG').\end{equation} By \cref{stack-of-categories}, every side of the sought equivalence is the $\infty$-category of global sections of a sheaf in categories over $X$, which means that the equivalence is \'etale-local on $X$. Therefore, by choosing a covering $U\to X$ which trivializes both $G$ and $G'$, we can reduce to the case $G=U\times \BGm\to U, G'=U \times \BGm\to U$, both maps being the projection to $U$. But by \cite[Corollary 5.4]{BP} (or more generally \cite[Corollary 9.4.2.4]{SAG}), $\QCoh(U\times \BGm)\simeq \QCoh(U)\otimes \QCoh(\BGm)$, and this proves our claim. Finally, t-exactness follows from \cref{t-structure-tensor} and the K\"unneth formula.

		Now we prove that the restriction of the functor \eqref{external-tens} to $\QCoh_\chi(G)\otimes_{\QCoh(X)}\QCoh_{\chi'}(G')$ lands in $\QCoh_{(\chi, \chi')}(G\times_X G').$ It is enough to construct an equivalence
		$$ \alpha(\cF\boxtimes \cF'): (\cF\boxtimes \cF')_{(\chi,\chi')} \simeq (\cF)_\chi\boxtimes (\cF')_{\chi'}$$
		for every $\cF \boxtimes \cF'$. With the notation of Remark \ref{remark-homogeneous}, we can do this using the following chain of equivalence:
		\begin{equation*}
		\begin{split}
		(\cF\boxtimes \cF')_{(\chi,\chi')} &=(p\times p')_*(\act^*_{(\alpha,\alpha')}(\cF\boxtimes\cF')\otimes \cL_{(\chi,\chi')}^\vee) \\ & \simeq (p\times p')_*((\act_{\alpha}\times\act_{\alpha'})^*(\cF\boxtimes\cF')\otimes (\cL_{\chi}^\vee\boxtimes\cL_{\chi'}^\vee))\\ &\simeq (p\times p')_*((\act^*_{\alpha}\cF\otimes \cL_{\chi}^\vee)\boxtimes(\act_{\alpha'}^*\cF'\otimes \cL_{\chi'}^\vee))\simeq (\cF)_\chi\boxtimes (\cF')_{\chi'}.
		\end{split}
	    \end{equation*}
		A straightforward computation shows that $\alpha$ is in fact a natural transformation and verifies the condition described in Remark \cref{equalizer}. 
	
		It remains to prove that the restricted functor induces an equivalence, which will automatically be t-exact. But again, it suffices to show this locally, and in the local case this just reduces to the fact that $$\QCoh_{\chi}(U\times\BGm)\otimes_{\QCoh(U)}\QCoh_{\chi'}(U\times \BGm)\simeq \QCoh(U)\otimes_{\QCoh(U)}\QCoh(U)$$$$\simeq \QCoh(U)$$ because $\QCoh_{\chi}(U\times\BGm)\simeq \QCoh(U)$ thanks to \cite[Lemma 5.20, (3)]{BP}.\end{proof}
		
	\begin{rem}
	Using the associativity of the box product $\boxtimes$, the same proof works in the case of a finite number of $\Gm$-gerbes.
	\end{rem}

	\begin{prop}\label{rigidification-pullback-theorem}Let $G\star G'$ be the $\Gm$-gerbe defined in \cref{star-product}. Then the pullback along $\rho:G\times_XG'\to G\star G'$ establishes a t-exact equivalence $$\QCoh_{\id}(G\star G')\xrightarrow{\sim}\QCoh_{(\id,\id)}(G\times_X G').$$
	\end{prop}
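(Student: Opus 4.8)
First I would construct the comparison on homogeneous components. With the notation of \cref{remark-homogeneous} applied both to the $\Gm$-gerbe $G\star G'$ and to the $\Gm\times\Gm$-gerbe $G\times_XG'$, the universal property of $\star$ (\cref{star-product}) says that $\rho$ intertwines the bandings via the multiplication $m:\Gm\times\Gm\to\Gm$; concretely this produces a commuting square of action maps relating $\act_\alpha$ on $(G\star G')\times\B\Gm$ to $\act_{(\alpha,\alpha')}$ on $(G\times_XG')\times\B(\Gm\times\Gm)$ along $\rho\times\B m$ and $\rho$. Combined with the line-bundle identity $(\B m)^*L_\id\simeq L_{(\id,\id)}=L_\id\boxtimes L_\id$ (which is nothing but $\id\circ m=(\id,\id)$), so that $(\rho\times\B m)^*\cL_\id\simeq\cL_{(\id,\id)}$, a base-change and projection-formula computation would yield a natural equivalence
$$\alpha(\cF):(\rho^*\cF)_{(\id,\id)}\xrightarrow{\ \sim\ }\rho^*\big((\cF)_\id\big),\qquad\cF\in\QCoh(G\star G').$$
The one genuinely non-formal point is reconciling the two pushforwards — the projections forgetting $\B\Gm$, respectively $\B(\Gm\times\Gm)$ — across $\B m$; this is the ``direct computation'' alluded to in \cref{outline}. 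To promote $\alpha$ to a functor on homogeneous subcategories I would invoke \cref{equalizer} with $(C_1,p_1,\eta_1)=(\QCoh(G\star G'),(-)_\id,i_\id)$, $(C_2,p_2,\eta_2)=(\QCoh(G\times_XG'),(-)_{(\id,\id)},i_{(\id,\id)})$ and the functor $\rho^*$; this requires checking the coherence $\rho^*(i_\id)\circ\alpha=i_{(\id,\id)}$, which, since each $i_\chi$ is the pullback along the atlas $u$ of the counit of $p^*\dashv p_*$, amounts to carrying the same identifications through the counits. The output is the desired functor $\rho^*:\QCoh_\id(G\star G')\to\QCoh_{(\id,\id)}(G\times_XG')$.

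Next I would show $\rho^*$ is an equivalence by \'etale descent. By \cref{stack-of-categories} both sides are categories of global sections of sheaves of categories on $X$, and $\rho^*$ is a morphism of such sheaves (its functoriality being exactly the content of \cref{equalizer}); hence it suffices to check that it is an equivalence after pulling back along an \'etale cover $U\to X$ trivializing both $G$ and $G'$. Over $U$ we get $G=G'=\BGm\times U$, so that $G\times_XG'=\B(\Gm\times\Gm)\times U$ and, rigidification of a trivial gerbe being trivial, $G\star G'=\BGm\times U$ with $\rho$ identified with $\B m\times\id_U$. By \cref{external-product-theorem} together with \cite[Lemma 5.20]{BP}, both $\QCoh_\id(\BGm\times U)$ and $\QCoh_{(\id,\id)}(\B(\Gm\times\Gm)\times U)$ are canonically identified with $\QCoh(U)$, and under these identifications $\rho^*$ becomes the identity — precisely via the trivialization $(\B m)^*L_\id\simeq L_{(\id,\id)}$ used above. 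Thus $\rho^*$ is a local, hence a global, equivalence.

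Finally, t-exactness comes essentially for free: by \cite[Appendix A]{AOV} the rigidification map $\rho$ is representable and flat, so $\rho^*$ is t-exact on all of $\QCoh$; since $(-)_\id$ and $(-)_{(\id,\id)}$ are t-exact (\cref{remark-homogeneous}) and the t-structures on the homogeneous subcategories are the restricted ones, the equivalence above is t-exact as claimed. I expect the main obstacle to be the first step — constructing $\alpha$ honestly and verifying its compatibility with the counits $i_\id$ and $i_{(\id,\id)}$, so that \cref{equalizer} genuinely applies — rather than the surjectivity/full faithfulness or the t-exactness, which reduce respectively to the trivial-gerbe model and to flatness of $\rho$.
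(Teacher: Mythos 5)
Your proposal follows essentially the same route as the paper: construct $\alpha(\cF):(\rho^*\cF)_{(\id,\id)}\simeq\rho^*\bigl((\cF)_\id\bigr)$ from the compatibility of the action maps with $(\rho,\textup{B}m)$ and the identity $(\textup{B}m)^*L_\id\simeq L_{(\id,\id)}$, feed it into \cref{equalizer}, check that the restricted functor is an equivalence \'etale-locally on trivial gerbes where $\rho$ becomes $(\id_X,\textup{B}m)$, and deduce t-exactness from representability and flatness of $\rho$. The one step you flag as genuinely non-formal --- reconciling the pushforwards along the projections across $\textup{B}m$ --- is resolved in the paper by observing that the unit $\id\to(\id,\textup{B}m)_*(\id,\textup{B}m)^*$ is an isomorphism (a consequence of the weight decomposition of $\QCoh(\B\Gm)$), after which base change along the pullback square of projections gives $q_{(\alpha,\alpha'),*}(\rho,\id)^*\simeq\rho^*q_{\alpha\alpha',*}$ and hence the claimed identification.
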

 \begin{proof}The proof is a direct application of the universal property of the $\star$-product, together with the fact that the structure map $\rho$ is flat (\cite[Theorem A.1]{AOV}). 
 
 Note first that the character $m:\Gm\times \Gm\to \Gm$ given by the multiplication induces a line bundle $L_{\id,\id}$ on $\BGm\times \BGm$. One can prove easily that this line bundle coincides with the external product of two copies of $L_\id$, the universal line bundle on $\BGm$. This means that what we denote by $\cL_{(\id,\id)}\in \QCoh(G\times_XG'\times \BGm\times \BGm)$ has the form $\cL_\id\boxtimes \cL_\id$ (again with the usual notations of Remark \ref{remark-homogeneous}).
		
		We need to prove that $\rho^*$ sends $\id$-twisted sheaves in $(\id,\id)$-twisted sheaves. To do this, we will construct an equivalence 
		$$ \alpha(\cF):(\rho^*\cF)_{(\id,\id)} \simeq \rho^*(\cF_{\id})$$ 
		for every $\cF$ in $\QCoh(G \star G')$ and apply \cref{equalizer}. An easy computation shows that $\alpha$ is in fact a natural transformation and it verifies the condition described in \cref{equalizer}.
		
		By construction of the $\star$ product, one can prove that the following diagram 
		$$
		\begin{tikzcd}
			G \times_X G'\times \BGm \times \BGm \arrow[rr,                  "{\act_{(\alpha,\alpha')}}"] \arrow[d, "{(\rho, \textup{B}m)}"] &  &               G\times_X G'  \arrow[d, "\rho"] \\
			G \star G' \times \BGm \arrow[rr, "\act_{\alpha\alpha'}"]                                                   &  & G \star G'                     
		\end{tikzcd}
		$$
		is commutative, where $\act_{(\alpha,\alpha')}$ is the action map of $G\times_X G'$ defined by the product banding, $\act_{\alpha\alpha'}$ is the action map of $G\star G'$ and $\textup{B}m$ is the multiplication map $m:\Gm \times \Gm \rightarrow \Gm$ at the level of classfying stacks. This implies that 
		$$\act^*_{(\alpha,\alpha')}(\rho^*\cF) =(\rho,\textup{B}m)^*\act^*_{\alpha\alpha'}(\cF).$$ 
		
		Consider now the following diagram:
		$$
		\begin{tikzcd}
			G \times_X G'\times \BGm \times \BGm \arrow[rdd, "{(\rho,\textup{B}m)}"', bend right] \arrow[rrrd, "{\tilde{q}_{(\alpha,\alpha')}}", bend left] \arrow[rd, "{(\id,\textup{B}m)}"] &                                                                                         &  &                                 \\
			& G \times_X G'\times \BGm \arrow[rr, "{q_{(\alpha,\alpha')}}"] \arrow[d, "{(\rho,\id)}"] &  & G\times_X G'  \arrow[d, "\rho"] \\
			& G \star G' \times \BGm \arrow[rr, "q_{\alpha\alpha'}"]                                 &  & G\star G'                      
		\end{tikzcd}
		$$
		where $\tilde{q}_{(\alpha,\alpha')},q_{(\alpha,\alpha')}$ and $q_{\alpha\alpha'}$ are the projections. This is a commutative diagram and the square is a pullback. 
		
		Finally, we can compute the $(\id,\id)$-twisted part of $\rho^*\cF$: 
		\begin{equation*}
			\begin{split}
				(\rho^*\cF)_{\id,\id}=& \tilde{q}_{(\alpha,\alpha'),*}(\act^*_{(\alpha,\alpha')}(\rho^*\cF)\otimes \cL_{\id,\id}^{\vee}) \\ =&
				q_{(\alpha,\alpha'),*}(\id,\textup{B}m)_*\Big( (\id,\textup{B}m)^*(\rho,\id)^*(\act_{\alpha\alpha'}^*(\cF))\otimes \cL_{\id,\id}^{\vee} \Big);
			\end{split}
		\end{equation*}
		notice that since $\textup{B}m^* L_{\id} = L_{\id,\id}$ we have $(\rho,\textup{B}m)^*\cL_{\id}=\cL_{\id,\id}$. Furthermore, an easy computation shows that the unit $\id \rightarrow (\id,\textup{B}m)_*(\id,\textup{B}m)^*$ is in fact an isomorphism,  because of the explicit description of the decomposition of the stable $\infty$-category of quasi-coherent sheaves over $\BGm$. These two facts together give us that 
		\begin{equation*}
			\begin{split}
				(\rho^*\cF)_{\id,\id}=& 
				q_{(\alpha,\alpha'),*}(\id,\textup{B}m)_*\Big( (\id,\textup{B}m)^*(\rho,\id)^*(\act_{\alpha\alpha'}^*(\cF))\otimes \cL_{\id,\id}^{\vee} \Big) \\ =&  q_{(\alpha,\alpha'),*}(\id,\textup{B}m)_*(\id,\textup{B}m)^*(\rho,\id)^*\Big( \act_{\alpha\alpha'}^*(\cF)\otimes \cL_{\id}^{\vee} \Big) \\ =&
				q_{(\alpha,\alpha'),*}(\rho,\id)^*\Big(\act^*_{\alpha\alpha'}(\cF)\otimes \cL_{\id}^{\vee}\Big) \\ =& 
				\rho^*q_{\alpha\alpha',*}\Big(\act^*_{\alpha\alpha'}(\cF)\otimes \cL_{\id}^{\vee}\Big) \\ =& 
				\rho^*( (\cF)_{\id} ).
			\end{split}
		\end{equation*}

		To finish the proof, we need to verify that $\rho^*$ restricted to twisted sheaves is an equivalence with the category of $(\id,\id)$-homogeneous sheaves.  Again, this can be checked \'etale locally, therefore we can reduce to the case $G\simeq G' \simeq X \times \BGm$ where the morphism $\rho$ can be identified with $(\id_X,\textup{B}m)$. We know that for the trivial gerbe we have that $\QCoh(X) \simeq \QCoh_{\id}(X\times \BGm)$ where the map is described by $ \cF \mapsto \pi^*\cF \otimes \cL_{\id}$, $\pi$ being is the structural morphism of the (trivial) gerbe. A straightforward computation shows that, using the identification above, the morphism $\rho^*$ is the identity of $\QCoh(X)$.
		
		Finally, t-exactness follows from the fact that $\rho$ is flat and representable (by \cite[Theorem A.1]{AOV}) and by \cite[Remark 9.1.3.4]{SAG}, a stacky and derived generalization of \cite[Tag 02KH]{Stacks}.
	\end{proof}
	
	\begin{rem}\label{remark-theorem}
	One can prove easily that $\rho_*$ is an inverse of the morphism $\rho^*$ which we have just described. It is still true that it sends twisted sheaves to twisted sheaves and that it has the same functorial property of the pullback, due to the natural adjunction. Furthermore, $\rho_*$ is t-exact because $\rho$ is a morphism of gerbes whose image through the banding functor $\textup{Band}(\rho)$ is a surjective morphism of groups with a linearly reductive group as kernel. This implies that it is the structure morphism of a gerbe banded by a linearly reductive group, therefore $\rho_*$ is exact.
	\end{rem}

 \begin{rem}\label{Conjecture-holds}As a direct consequence of \cref{external-product-theorem} and \cref{rigidification-pullback-theorem}, \cite[Conjecture 5.27]{BP} holds.\end{rem}

	\begin{proof}[Proof of \cref{id-is-invertible}]
	Na\"ively speaking, the proof amounts to combining \cref{external-product-theorem} and \cref{rigidification-pullback-theorem}. First of all, we start with the stable case. We have to lift $\QCoh_{\id}(-)$ from a morphism of $\infty$-groupoids to a symmetric monoidal functor, i.e. to extend the action of $\QCoh_{\id}(-)$ to multilinear maps. Let $\{ G_i \}_{i \in I}$ be a sequence of $\Gm$-gerbes indexed by a finite set $I$ and $H$ be a $\Gm$-gerbe, then we can define a morphism of simplicial set
	$$\QCoh_{\id}(-): \textup{Mul}(\{ G_i \}_{i \in I}, H) \longrightarrow \textup{Mul}(\{ \QCoh_{\id}(G_i) \}_{i \in I}, \QCoh_{\id}(H))$$ 
	by the following rule: if $f:\prod_{i \in I} G_i \rightarrow H$ is a multilinear map, we define $\QCoh_{\id}(f)$ to be the composition $f_* \circ \boxtimes^I: \prod_{i \in I} \QCoh_{\id}(G_i) \rightarrow \QCoh_{\id}(H)$ where $\boxtimes^I:\prod_{i \in I} \QCoh_{\id}(G_i) \rightarrow \QCoh_{\id}(\prod_{i \in I}G_i)$ is the $I$-fold box product. The fact that this association is well-defined follows from Proposition \ref{external-product-theorem} and Proposition \ref{rigidification-pullback-theorem}. A priori, $\QCoh_{\id}(-)$ is a lax-monoidal functor from $\Ger_{\Gm}(X)^{\star}$ to $\Lin(X)^{\times}$, where $\Lin(X)^{\times}$ is the symmetric monoidal structure on $\Lin(X)$ induced by the product of $\infty$-categories. However, $f_*$ and $\boxtimes^I$ are both $\QCoh(X)$-linear and preserves small colimits. Notice that $f_*$ preserves small colimits because it is exact, due to the fact that it is a $\mu$-gerbe, with $\mu$ a linearly reductive group (see Remark \ref{remark-theorem}). This implies that $\QCoh_{\id}(-)$ can be upgraded to a morphism of $\infty$-operads if we take the operadic nerve. It remains to prove that it is symmetric monoidal, i.e. to prove that it sends coCartesian morphism to coCartesian morphism. This follows again from the isomorphisms described in Proposition \ref{rigidification-pullback-theorem} and Proposition \ref{external-product-theorem}.

	The prestable case can be dealt with in the exact same way. Because the equivalences in Proposition \ref{external-product-theorem} and Proposition \ref{rigidification-pullback-theorem} are t-exact, they restrict to the prestable connective part of the $\infty$-categories. The fact that they remain equivalences can be checked \'etale locally.
	\end{proof}

		\subsection{Gerbes of connective trivializations}\label{Triv-is-gerbe}
	
		Let $M\in \cBr(X)$. We recall that the functor $$\Triv_{\geq 0}(M):\Sch_{/X}\to \cS$$
		is defined as $$(S\to X)\mapsto \Equiv_{\QCoh(S)_{\geq 0}}(\QCoh(S)_{\geq 0},\cM(S))$$ where $\cM$ is the stack of categories associated to $M$ (see \cref{stack-of-categories}).
	
	Our aim in this subsection is to prove that for every $M\in \cBr(X)$, the functor $\Triv_{\geq 0}(M)$ has a natural structure of a gerbe over $X$, and also that the functor $\Triv_{\geq 0}(-):\cBr(X)\to \Ger_{\Gm}(X)$ can be promoted to a symmetric monoidal functor $\cBr(X)^\otimes\to \Ger_{\Gm}(X)^{\mathlarger\star}$.
	Let us recall the main result about the Brauer space proven in \cite{SAG} (specialized to the case of qcqs schemes).
	\begin{thm}[{\cite[Theorem 11.5.7.11]{SAG}}]\label{local-triviality-prestable}Let $X$ be a qcqs scheme. Then for every $M\in \Br(X)=\pi_0(\cBr(X))$, there exists an \'etale covering $f:U\to X$ such that $f^*M=0$ in $\Br(U)$; that is, any representative of $f^*M$ is equivalent to $\QCoh(U)_{\geq 0}$ as an $\infty$-category.
	\end{thm}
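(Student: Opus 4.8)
The plan is to deduce this local triviality statement from the cohomological interpretation of $\Br$ together with the elementary fact that higher étale cohomology classes are locally trivial. By \cref{cohomological-interpretation-brauer} there is, for every qcqs scheme $Y$, a bijection $\Br(Y)\leftrightarrow \tH^2_\ett(Y,\Gm)$ coming from the equivalence $\cBr(Y)\simeq \Map(Y,\tK(\Gm,2))$. The first thing I would record is that this identification is natural in $Y$: for an étale map $f\colon U\to X$, the induced pullback $\Br(X)\to \Br(U)$ must correspond to the restriction $\tH^2_\ett(X,\Gm)\to \tH^2_\ett(U,\Gm)$. This naturality is essentially built into the construction, since $f^{*}$ on Brauer spaces is precomposition with $f$ under $\cBr(-)\simeq \Map(-,\tK(\Gm,2))$ and $\tH^2_\ett(-,\Gm)=\pi_0\Map(-,\tK(\Gm,2))$; I would isolate it as a short lemma.

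Granting naturality, fix $M\in \Br(X)$ and let $\alpha\in \tH^2_\ett(X,\Gm)$ be the corresponding class. A standard fact about sheaf cohomology is that the presheaf $U\mapsto \tH^{n}_\ett(U,\Gm)$ has vanishing sheafification for $n\geq 1$; concretely, every geometric point of $X$ admits an étale neighbourhood on which $\alpha$ restricts to $0$. Since $X$ is quasicompact, finitely many such neighbourhoods cover $X$, and their disjoint union is a single étale cover $f\colon U\to X$ with $\alpha|_{U}=0$. By naturality the class attached to $f^{*}M$ is $\alpha|_{U}=0$, so $f^{*}M=0$ in $\Br(U)$; unwinding \cref{defin-derived-Brauer}, this says precisely that any representative of $f^{*}M$ is equivalent to $\QCoh(U)_{\geq 0}$, which is the assertion.

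The point requiring genuine care — and what I regard as the main obstacle — is a potential circularity: in \cite{SAG} the cohomological interpretation is itself \emph{obtained from} local triviality, so for a self-contained argument one should proceed intrinsically instead. For this I would use 1-affineness (\cref{stack-of-categories}) to present $M$ as the global sections of a sheaf $\cM$ of prestable $\cO_X$-linear categories, reducing the problem to trivializing $\cM$ on the strict henselizations $\cO_{X,\bar{x}}^{\sh}$; a spreading-out argument, using that $X$ is qcqs so that all the data are of finite presentation over the base, then upgrades these stalkwise trivializations to a trivialization over an honest étale cover. The hard core is the local statement over a strictly henselian local ring $R$: passing to the completion and descending by Artin approximation, one propagates triviality along the nilpotent thickenings of the separably closed residue field $k$ — invertible objects being insensitive to nilpotent extensions by deformation theory — and is left with the case of a separably closed field $k$, where the compact generator of an invertible category has endomorphisms a derived Azumaya $k$-algebra that, the Picard and classical Brauer obstructions both vanishing over $k$, must be Morita trivial, forcing the category to be $\QCoh(\Spec k)_{\geq 0}$.
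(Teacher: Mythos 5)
The first thing to say is that the paper does not prove this statement at all: it is imported verbatim from \cite[Theorem 11.5.7.11]{SAG} and is then used as a genuine \emph{input} to the paper's own results (it is exactly what makes $\Triv_{\geq 0}(M)$ locally nonempty in \cref{triv-gerbe}). So any comparison is really with Lurie's proof. Your first route is circular, as you yourself flag, and the circularity is not avoidable by a better choice of reference: both Lurie's proof of the identification $\Br(Y)\simeq \tH^2_\ett(Y,\Gm)$ (\cref{cohomological-interpretation-brauer}) and this paper's alternative route to it (the main theorem) take local triviality as a prerequisite, so there is no independent source for the cohomological interpretation from which to run your argument. That half of the proposal therefore proves nothing.

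Your second, ``intrinsic'' route has the right shape --- it is broadly the strategy of \cite{SAG} --- but as written it is an outline whose asserted steps are precisely the hard content of the theorem. Concretely: (i) the reduction from stalks to an honest \'etale cover requires that $R\mapsto \cBr(R)$ commute with filtered colimits of rings, which is where compact generation and dualizability of $M$ enter and must be justified, not just invoked via ``qcqs''; (ii) a strictly henselian local ring $R$ of an ordinary scheme is \emph{not} an iterated nilpotent thickening of its residue field, so the deformation-theoretic propagation only reaches the quotients $R/I^n$ ($I$ the maximal ideal), and your completion-plus-Artin-approximation detour then needs a separate convergence statement identifying $\cBr(\lim_n R/I^n)$ with $\lim_n\cBr(R/I^n)$; and (iii) most importantly for the \emph{prestable} statement, trivializing the stabilization is not enough: after you know $\textup{st}(f^*M)\simeq\QCoh(U)$ you must still show that the t-structure induced by $f^*M$ is the standard one, i.e.\ rule out the nontrivial t-structures classified in \cref{t-structures}; this step has no analogue in the classical vanishing of $\tH^2_\ett$ over a strictly henselian ring and is where the prestable theory does real work. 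Since none of (i)--(iii) is supplied, the proposal does not constitute a proof; given that the paper itself treats the statement as a black box from \cite{SAG}, citing it (as the paper does) is the honest option unless you are prepared to reproduce Lurie's argument in full.
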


	\begin{lem}\label{local-lemma}
		Let $X$ be a qcqs scheme. Then the stack $\Equiv_{\cQCoh(X)_{\geq 0}}(\cQCoh(X)_{\geq 0},\cQCoh(X)_{\geq 0})$ is equivalent to $\BGm\times X$.
	\end{lem}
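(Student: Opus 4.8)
The plan is to identify the automorphism stack with the Picard stack of the unit object, and then to compute that Picard stack, using crucially that we work in the connective (prestable) setting.

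First I would observe that, for any $S\to X$, the space of morphisms $\QCoh(S)_{\geq 0}\to \QCoh(S)_{\geq 0}$ in $\Linpre(X)$ --- that is, of $\QCoh(S)_{\geq 0}$-linear colimit-preserving endofunctors --- is itself equivalent to $\QCoh(S)_{\geq 0}$, via the evaluation $F\mapsto F(\cO_S)$. This is the standard self-enrichment of the unit module: because $\cO_S$ generates $\QCoh(S)_{\geq 0}$ under colimits and the linear action, any such $F$ is canonically identified with $(-)\otimes_{\cO_S}F(\cO_S)$, and composition of endofunctors corresponds to the tensor product on $\QCoh(S)_{\geq 0}$. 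Under this monoidal equivalence the invertible endofunctors, i.e. the equivalences, correspond exactly to the $\otimes$-invertible objects. Hence $\Equiv_{\QCoh(S)_{\geq 0}}(\QCoh(S)_{\geq 0},\QCoh(S)_{\geq 0})$ is naturally equivalent to the Picard groupoid $\cPic(\QCoh(S)_{\geq 0})$, functorially in $S$, so that as stacks over $X$ the left-hand side is the Picard stack $S\mapsto \cPic(\QCoh(S)_{\geq 0})$.

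Next I would compute this Picard stack. An invertible object $L\in \QCoh(S)_{\geq 0}$ has a connective inverse $L^{\vee}$ with $L\otimes L^{\vee}\simeq \cO_S$, and Zariski-locally $L$ becomes trivial; here connectivity is essential, as it forces $L$ to be a genuine line bundle concentrated in degree $0$, ruling out the shifts $\cO_S[n]$ that would appear in the stable category $\QCoh(S)$. Thus $\pi_0\cPic(\QCoh(S)_{\geq 0})=\Pic(S)=\tH^1_\ett(S,\Gm)$. The automorphisms of the unit are the units of $\End_{\QCoh(S)_{\geq 0}}(\cO_S)=\Gamma(S,\cO_S)$, so that $\pi_1=\Gm(S)=\tH^0_\ett(S,\Gm)$, and since $\cO_S$ is discrete all higher homotopy vanishes. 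These are precisely the homotopy sheaves of $\Map(-,\BGm)$, whence the Picard stack is identified with $\BGm\times X$. Assembling the identifications and checking naturality in $S$ (legitimate by \cref{stack-of-categories}, which guarantees all objects in sight are stacks), the pointwise equivalences glue to an equivalence of stacks over $X$, i.e. $\Equiv_{\QCoh(X)_{\geq 0}}(\QCoh(X)_{\geq 0},\QCoh(X)_{\geq 0})\simeq \BGm\times X$.

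I expect the main obstacle to be the first step: making the self-enrichment identification $\mathrm{Fun}^{\textup{L}}_{\QCoh(S)_{\geq 0}}(\QCoh(S)_{\geq 0},\QCoh(S)_{\geq 0})\simeq \QCoh(S)_{\geq 0}$ precise and functorial in $S$ inside the prestable $\QCoh(X)_{\geq 0}$-linear framework, together with the careful verification that connectivity pins down the invertible objects as honest line bundles. This is exactly the point where the prestable setting behaves better than the stable one, and is ultimately responsible for the absence of a $\textup{B}\Z$ factor.
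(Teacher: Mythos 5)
Your proposal is correct and follows essentially the same route as the paper: identify a connective $\QCoh(S)_{\geq 0}$-linear autoequivalence with its value on $\cO_S$, use connectivity (of the object and of its inverse) to force that invertible object to be an honest line bundle in degree $0$, and conclude that the stack is the Picard stack $\cPic\times X\simeq \BGm\times X$. Your extra homotopy-sheaf computation is a harmless elaboration of the final identification, which the paper makes directly via the classifying-map description of line bundles.
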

	\begin{proof}
		Let $S\to X$ be a morphism of schemes. A $\QCoh(S)$-linear autoequivalence of $\QCoh(S)$ is determined by the image of $\cO_S$, and therefore amounts to the datum of a line bundle $\cL$ concentrated in degree $0$ (the degree must be nonnegative because we are in the connective setting, and if it were strictly positive then the inverse functor would be given by tensoring by a negatively-graded line bundle, which is impossible).
		
		This implies that the desired moduli space is $\cPic\times X$, which is the same as $\BGm\times X$.
	\end{proof}
	
	\begin{prop}\label{triv-gerbe}Let $X$ be a qcqs scheme, and $M\in \cBr(X)$. Then $\Triv_{\geq 0}(M)$ has a natural structure of $\Gm$-gerbe over $X$.
	Furthermore, given an isomorphism $f:M \rightarrow N$ of prestable invertible categories, then the morphism $\Triv_{\geq 0}(f)$ defined by the association $\phi\mapsto f \circ \phi$ is a morphism of $\Gm$-gerbes.\end{prop}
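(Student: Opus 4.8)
The plan is to verify the defining conditions of a $\Gm$-gerbe directly, controlling the behaviour étale-locally by means of the local triviality of objects of $\cBr(X)$ and producing the banding from a direct analysis of automorphisms. First I would record that $\Triv_{\geq 0}(M)$ is a stack (this is already observed in \cref{defin-triv}, and follows from \cref{stack-of-categories} together with the fact that $\cM$ and $\QCoh(-)_{\geq 0}$ are stacks of categories), and that it is a \emph{stack in groupoids}: this last point is a consequence of the local computation below, which exhibits it étale-locally as $\BGm\times X$ and hence as a $1$-truncated stack.

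For the local nonemptiness and connectedness I would use \cref{local-triviality-prestable} to choose an étale covering $f\colon U\to X$ with $f^{*}M\simeq \QCoh(U)_{\geq 0}$ in $\cBr(U)$. The compatibility I need is that $\Triv_{\geq 0}$ commutes with base change along $f$, that is $\Triv_{\geq 0}(M)\times_X U\simeq \Triv_{\geq 0}(f^{*}M)$ as stacks over $U$; this follows from the functoriality of $X\mapsto \QStkpre(X)$, under which the stack of categories $\cM$ restricts over $U$ to the one attached to $f^{*}M$, so that $\cM(S)\simeq \QCoh(S)_{\geq 0}$ for $S\to U$. Then \cref{local-lemma} gives $\Triv_{\geq 0}(M)\times_X U\simeq \BGm\times U$, which is nonempty (the identity equivalence is a section) and connected (the fibre $\BGm$ is), establishing the first axiom.

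It remains to produce the banding, which I expect to be the main obstacle. Over a test scheme $S\to X$, a point of $\Triv_{\geq 0}(M)(S)$ is an equivalence $\phi\colon \QCoh(S)_{\geq 0}\xrightarrow{\sim}\cM(S)$, and the inertia $\cI_{\Triv_{\geq 0}(M)}$ assigns to it the group $\Aut(\phi)$ of natural automorphisms of $\phi$. Whiskering by $\phi^{-1}$ on the source identifies $\Aut(\phi)\cong \Aut(\id_{\QCoh(S)_{\geq 0}})$, and an $\cO_S$-linear natural automorphism of the identity is determined by its value on the generator $\cO_S$, hence is multiplication by a unit; this yields $\Aut(\id_{\QCoh(S)_{\geq 0}})\cong\Gm(S)$. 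The delicate point is that this identification must be independent of the chosen section $\phi$ in order to glue to a global isomorphism $\Gm\times X\xrightarrow{\sim}\alpha_*\cI_{\Triv_{\geq 0}(M)}$: I would argue that for any morphism $\psi\colon\phi\Rightarrow\phi'$ the conjugation isomorphism $\Aut(\phi)\cong\Aut(\phi')$ carries a scalar $\lambda$ to the same scalar $\lambda$, because scalar natural transformations are central and therefore commute with $\psi$. As all sections share the common source $\QCoh(S)_{\geq 0}$, the resulting identification with $\Gm(S)$ is canonical, and this is precisely the banding.

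Finally, for the functoriality statement, given an isomorphism $f\colon M\to N$ in $\cBr(X)$ I would check that $\Triv_{\geq 0}(f)\colon\phi\mapsto f\circ\phi$, which is visibly a morphism of stacks over $X$ (and automatically invertible, since all morphisms of $\Gm$-gerbes are), respects the bandings. On inertia it sends $\eta\in\Aut(\phi)$ to $\id_f *\eta\in\Aut(f\circ\phi)$; computing the induced map through the two identifications with $\Gm(S)$---both obtained by whiskering with the shared source $\QCoh(S)_{\geq 0}$---gives the identity, since precomposing with $f^{-1}$ cancels the postcomposition with $f$. Hence the induced map on inertia commutes with the bandings, and $\Triv_{\geq 0}(f)$ is a morphism of $\Gm$-gerbes.
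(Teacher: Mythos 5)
Your proof is correct and follows essentially the same route as the paper's: local triviality of $M$ together with \cref{local-lemma} gives local nonemptiness and connectedness, and the banding is obtained by whiskering scalar automorphisms of the identity functor of $\QCoh(S)_{\geq 0}$ with the section $\phi$, with the functoriality statement checked via horizontal composition with $\id_f$. The only cosmetic difference is that you verify the banding is an isomorphism section-by-section (using centrality of the scalar transformations), whereas the paper defines the global map $\alpha_M(\phi,\lambda)=(\phi,\id_\phi * \lambda)$ and checks that it is an isomorphism by étale-local reduction to the trivial gerbe.
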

	\begin{proof}
		
		To prove that $\Triv_{\geq 0}(M)$ is a $\Gm$-gerbe we need to verify that $\Triv_{\geq 0}(M)$ is locally nonempty and connected (i.e. locally in the base, it is nonempty and any two pair of objects are connected by an isomorphism) and to provide a $\Gm$-banding. The first two assertions are evident from the fact that $\Triv_{\geq 0}(M)$ is locally (in the base) of the form $X \times \BGm$ by \cref{local-lemma}.
		Now we provide the banding in the following way. First of all, notice that $\mathbb G_{\textup{m},X}$ can be identified with the automorphism group of the identity endofunctor of $\QCoh(X)_{\geq 0}$, namely $\cO_X$-linear invertible natural transformations of $\id_{\QCoh(X)_{\geq 0}}$. Let $\cI_{\Triv_{\geq 0}(M)}$ be the inertia stack of $\Triv_{\geq 0}(M)$. We define a functor 
		$$ \alpha_M: \Triv_{\geq 0}(M) \times \Gm \longrightarrow \cI_{\Triv_{\geq 0}(M)} $$ 
		as $\alpha_M(\phi,\lambda):= (\phi, \id_{\phi} * \lambda) \in \cI_{\Triv_{\geq 0}(M)}$ for every $(\phi,\lambda)$ object of $\Triv_{\geq 0}(M)$, $*$ being the horizontal composition of natural transformations. Since being an isomorphism is an \'etale-local property, we can reduce to the trivial case, for which both sides of the banding are isomorphic to $(X \times \B\Gm) \times \Gm$ while the morphism is just the identity. The second part of the statement follows from a straightforward computation. More precisely, we need to prove that the following diagram 
        $$
        \begin{tikzcd}
        \Triv_{\geq 0}(M) \times \Gm \arrow[d, "\Triv_{\geq 0}(f)"] \arrow[r, "\alpha_M"] & \cI_{\Triv_{\geq 0}(M)} \arrow[d, "\cI_{\Triv_{\geq 0}(f)}"] \\
        \Triv_{\geq 0}(N) \times \Gm \arrow[r, "\alpha_N"]                             & \cI_{\Triv_{\geq 0}(N)}                   
        \end{tikzcd}
        $$
        is commutative, where $\cI_{\Triv_{\geq 0}(f)}$ is the morphism induced by $\Triv_{\geq 0}(f)$ on the inertia stacks. This follows from the trivial equality $\id_{f} * \id_{\phi} = \id_{f \circ \phi}$.
     
	\end{proof}
	
\begin{rem}
     The following lemma is an example of a result which is more natural to prove in the context of $\infty$-categories, or in particular $\infty$-operads. In the proof of \cref{id-is-invertible}, we essentially proved that there exists an isomorphism 
     $$ \QCoh_{\id}(G \star G') \simeq \QCoh_{\id}(G) \otimes_{\QCoh(X)} \QCoh_{\id}(G')$$
     and then proved that it is compatible with the higher structures. This follows essentially from the close relation between the $\infty$-categories of twisted sheaves on $G\times_X G'$ and on $G \star G'$ (see \cref{rigidification-pullback-theorem}). 

     As far as the functor $\Triv_{\geq 0}(-)$ is concerned, the same cannot be done so easily. One would like to use the morphism of gerbes
     $$\Triv_{\geq 0}(M) \times_X \Triv_{\geq 0}(N) \rightarrow \Triv_{\geq 0}(M) \star \Triv_{\geq 0}(N)$$
     but there is no natural way to construct a morphism 
     $$ \Triv_{\geq 0}(M \otimes N) \rightarrow \Triv_{\geq 0}(M) \times_X \Triv_{\geq 0}(N)$$ 
     as in the proof of \cref{id-is-invertible}. However, the two symmetric monoidal structures are constructed using the (cartesian) product structure. Therefore, the idea is to prove that the $\otimes$-structure naturally transforms into the $\star$-structure through $\Triv_{\geq 0}$ as $\infty$-operad. It is important to remark that $\Triv_{\geq 0}(\prod M_i)$ is not isomorphic to $\prod \Triv_{\geq 0}( M_i)$. Nevertheless, we just need to universal property of the product to prove \cref{symm-mon}.
     \end{rem}

 \begin{lem}\label{symm-mon}
	    The functor $\Triv_{\geq 0}(-)$ is symmetric monoidal.
	\end{lem}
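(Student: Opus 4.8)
The plan is to upgrade $\Triv_{\geq 0}(-)$ to a morphism of the simplicial colored operads underlying $\cBr(X)^{\otimes}$ and $\Ger_{\Gm}(X)^{\mathlarger\star}$, and then to verify that this morphism is strong monoidal. As the preceding remark stresses, the essential point is that although there is no natural comparison $\Triv_{\geq 0}(M\otimes N)\to\Triv_{\geq 0}(M)\times_X\Triv_{\geq 0}(N)$, both monoidal structures are built out of Cartesian products at the operadic level, so I can use the universal property of the (tensor, resp. fibre) products to produce a map in the opposite direction.

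First I would construct the action on multimaps. Fix a finite set $I$, objects $\{M_i\}_{i\in I}$ and $N$ of $\cBr(X)$, and a multilinear map $f\in\textup{Mul}(\{M_i\}_{i\in I},N)$; via \cref{stack-of-categories} and the universal property of $\otimes$ this is the datum of an equivalence $\tilde f:\bigotimes_{i\in I}M_i\xrightarrow{\sim}N$ of $\QCoh(X)_{\geq 0}$-linear categories. I then define a morphism of stacks over $X$
$$\Triv_{\geq 0}(f):\prod_{i\in I}\Triv_{\geq 0}(M_i)\longrightarrow\Triv_{\geq 0}(N)$$
by sending, for each $S\to X$, a tuple $(\phi_i:\QCoh(S)_{\geq 0}\xrightarrow{\sim}\cM_i(S))_{i\in I}$ to the composite
$$\QCoh(S)_{\geq 0}\simeq\bigotimes_{i\in I}\QCoh(S)_{\geq 0}\xrightarrow{\ \bigotimes_i\phi_i\ }\bigotimes_{i\in I}\cM_i(S)\xrightarrow{\ \tilde f_S\ }\mathcal N(S),$$
where the first equivalence is idempotence of the tensor unit and $\tilde f_S$ is the localization of $\tilde f$ at $S$, an equivalence by \cref{stack-of-categories}. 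Since each $\phi_i$ is an equivalence the composite is one, hence a point of $\Triv_{\geq 0}(N)(S)$, and naturality in $S$ is clear.

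Next I would check that the banding is the multiplication map. The source $\prod_{i\in I}\Triv_{\geq 0}(M_i)$ is a $\Gm^I$-gerbe, and to see that $\Triv_{\geq 0}(f)$ is a multilinear map of gerbes I must verify that the induced map on inertia carries the product banding to that of $\Triv_{\geq 0}(N)$ along the $|I|$-fold multiplication of $\Gm$. By \cref{triv-gerbe} the $i$-th copy of $\Gm$ acts on $\phi_i$ by whiskering with a scalar automorphism $\lambda_i$ of $\id_{\QCoh(S)_{\geq 0}}$; under $\bigotimes_i\phi_i$ these scalars tensor together, and the unit equivalence $\bigotimes_i\QCoh(S)_{\geq 0}\simeq\QCoh(S)_{\geq 0}$ identifies $\bigotimes_i\lambda_i$ with the product $\prod_i\lambda_i\in\Gm$, because scalar automorphisms of the tensor unit multiply. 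This is precisely the assertion that the banding of $\Triv_{\geq 0}(f)$ is $\Gm^I\to\Gm$, so indeed $\Triv_{\geq 0}(f)\in\textup{Mul}(\{\Triv_{\geq 0}(M_i)\}_{i\in I},\Triv_{\geq 0}(N))$.

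Finally, for monoidality, functoriality of $\otimes$ and of localization makes $f\mapsto\Triv_{\geq 0}(f)$ compatible with composition of multimaps, with units (the nullary case being the empty tensor), and with the symmetric group actions, so it assembles into a morphism of $\infty$-operads over $\Fin_*$ after operadic nerves. To conclude it is symmetric monoidal I would check preservation of coCartesian edges: applying the construction to $\tilde f=\id_{\bigotimes_i M_i}$ produces the universal multimap $\prod_i\Triv_{\geq 0}(M_i)\to\Triv_{\geq 0}(\bigotimes_i M_i)$, which by the universal property of the rigidified product (\cref{star-product}) factors through a canonical morphism
$$\Triv_{\geq 0}(M_1)\star\cdots\star\Triv_{\geq 0}(M_n)\longrightarrow\Triv_{\geq 0}(M_1\otimes\cdots\otimes M_n).$$
This is a banding-compatible morphism of $\Gm$-gerbes, hence automatically an equivalence, since $\Ger_{\Gm}(X)$ is a $(2,0)$-category in which every morphism of $\Gm$-gerbes is invertible (\cite[Lemma 12.2.4]{Olsson}). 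Thus the operad map carries coCartesian edges to coCartesian edges, and $\Triv_{\geq 0}(-)$ is strong symmetric monoidal. The step I expect to be the main obstacle is not any individual equivalence — each reduces to the banding bookkeeping above or to the formal invertibility of gerbe morphisms — but rather the operadic coherence: showing that the pointwise assignment $f\mapsto\Triv_{\geq 0}(f)$ genuinely respects the simplicial colored operad structure (composition, symmetries, and the coCartesian lifts that compute $\star$ from $\otimes$), so that it defines a bona fide map of $\infty$-operads rather than merely a compatible family of maps on multimapping spaces.
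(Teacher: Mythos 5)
Your proposal is correct and follows essentially the same route as the paper's proof: you define the action on multimaps via the tuple $\{\phi_i\}\mapsto \tilde f\circ(\bigotimes_i\phi_i)$ (the paper writes this as $f\circ\prod\phi_i$ and then exhibits the same factorization through the relative tensor product to see it is an equivalence), verify the banding condition by the identical computation $\lambda_1\otimes\cdots\otimes\lambda_n=m^n(\lambda_1\cdots\lambda_n)$, and conclude strong monoidality from the fact that banding-compatible morphisms of $\Gm$-gerbes are automatically invertible. The only cosmetic difference is that you package the coCartesian-edge check through the universal multimap and the rigidified product explicitly, which the paper leaves implicit in its closing sentence.
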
	
 \begin{proof}
	To upgrade $\Triv_{\geq 0}$ to a symmetric monoidal functor, we need to define its action on multilinear maps. Let $\{ M_i \}_{i \in I}$ be a sequence of invertible prestable $\cO_X$-linear $\infty$-categories indexed by a finite set $I$ and $N$ be another invertible prestable $\cO_X$-linear category. Then we define 
	$$ \Triv_{\geq 0}(-): \textup{Mul}\Big(\{ M_i\}_{i \in I}, N\Big) \longrightarrow \textup{Mul}\Big(\{ \Triv_{\geq 0}(M_i)\}_{i \in I}, \Triv_{\geq 0}(N) \Big)$$
	in the following way: if $f:\prod_{i \in I} M_i \rightarrow N$ is a morphism in $\Linpre(X)$ which preserves small colimits separately in each variable, we have to define the image $\Triv_{\geq 0}(f)$ as a functor 
	$$\Triv_{\geq 0}(f): \prod_{i \in I} \Triv_{\geq 0}(M_i) \longrightarrow \Triv_{\geq 0}(N)$$ 
	such that $\textup{Band}(\Triv_{\geq 0}(f))$ is the $n$-fold multiplication of $\Gm$, where $n$ is the cardinality of $I$. We define it on objects in the following way: if $\{ \phi_i \}$ is an object of $\prod_{i \in I}(\Triv_{\geq 0}(M_i))$, then we set 
	$$\Triv_{\geq 0}(f) \Big(\{ \phi_i \} \Big) := f \circ \prod \phi_i$$
	where $\prod \phi_i$ is just the morphism induced by the universal property of the product. 
	
	First of all we need to prove that $f \circ \prod \phi_i$ is still an equivalence. Let $S \in \Sch_X$ and $f: \prod_{i\in I}M_i \rightarrow N$ morphism in $\Linpre(X)$, we can consider the following diagram:
	\begin{equation}\label{diagram}
	\begin{tikzcd}
\QCoh(S)  \arrow[r, "\prod \phi_i"] \arrow[d, "\otimes \phi_i"'] & \prod_{i \in I} \cM_i(S) \arrow[ld,"u"'] \arrow[d, "f(S)"] \\
\bigotimes_{i\in I} \cM_i(S) \arrow[r, "\tilde{f}(S)"]                   & \mathcal{N}(S)
\end{tikzcd}
\end{equation}
where the tensors are in fact relative tensors over $\QCoh(S)$. The diagonal map $u$ is the morphism universal between all the $\QCoh(X)$-linear morphisms from $\prod_{i \in I} \cM_i(S)$ which preserve small colimits separately in each variable. Equivalently, one can say that it is a coCartesian edge in the $\infty$-operad $\Linpre(S)^{\otimes}$.

Thus, it is enough to prove that both $\otimes \phi_i$ and $\tilde{f}$ are equivalences. Because the source of the functor $\Triv_{\geq 0}(-)$ is the $\infty$-groupoid $\cBr(S)$, the morphism $\tilde{f}(S)$ is an equivalence. Furthermore, the morphisms $\phi_i$ are equivalences, therefore it follows from the functoriality of the relative tensor product that $\otimes \phi_i$ is an equivalence. A straightforward computation shows that it is defined also at the level of $1$-morphisms and it is in fact a functor.

It remains to prove that $\textup{Band}\Big(\Triv_{\geq 0}(f)\Big)$ is the $n$-fold multiplication of $\Gm$, where $n$ is the cardinality of $I$. It is equivalent to prove the commutativity of the following diagram:
$$\begin{tikzcd}
\prod_{i \in I}\Triv_{\geq 0}(M_i) \times \Gm^n \arrow[rr, "{(\Triv_{\geq 0}(f), m^n)}"] \arrow[d, "\prod \alpha_{M_i}"] &  & \Triv_{\geq 0}(N) \times \Gm \arrow[d, "\alpha_N"] \\
\prod_{i \in I} \mathcal{I}_{\Triv_{\geq 0}(M_i)} \arrow[rr, "\mathcal{I}_{\Triv_{\geq 0}(f)}"]                         &  & \mathcal{I}_{\Triv_{\geq 0}(N)}
\end{tikzcd}$$
where $m^n$	is the $n$-fold multiplication of $\Gm$. The notation follows the one in the proof of \cref{triv-gerbe}. Using diagram \ref{diagram} again, we can reduce to the following straightforward statement: let $\lambda_1,\dots,\lambda_n$ be $\QCoh(S)$-linear automorphisms of $\id_{\QCoh(S)}$, which can be identified with elements of $\Gm(S)$; then the tensor of the natural transformations coincide with the product as elements of $\Gm$, i.e $\lambda_1 \otimes \dots \otimes \lambda_n = m^n(\lambda_1\dots\lambda_n)$. 

Finally, because all maps are maps of gerbes, the condition of being strictly monoidal is automatically satisfied once the lax monoidal structure is given.
\end{proof}
	
	\subsection{Proof of the main theorem}\label{section-equivalence}
	The goal of this subsection is to prove that the constructions $$M \mapsto \Triv_{\geq 0}(M)$$
	and
	$$G\mapsto \QCoh_{\id}(G)_{\geq 0}$$
	establish a (symmetric monoidal) categorical equivalence between the $\infty$-groupoids $\cBr(X)$ and $\Ger_{\Gm}(X)$, thus proving \cref{main-theorem}.
 \begin{rem}\label{2-groupoids}Note that both sides of \cref{main-theorem} are in fact 2-groupoids, the left-hand-side by \cite[Construction 11.5.7.13]{SAG} and the right-hand-side by definition. However, we do not use this in the proof.\end{rem}

	\begin{prop}\label{Triv-is-ff}The functor $\Triv_{\geq 0}(-)$ is fully faithful.\end{prop}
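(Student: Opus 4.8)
The plan is to exploit the fact that both $\cBr(X)$ and $\Ger_{\Gm}(X)$ are grouplike symmetric monoidal (Picard) $\infty$-groupoids and that $\Phi=\Triv_{\geq 0}(-)$ is symmetric monoidal (\cref{symm-mon}), so as to reduce full faithfulness to a single computation at the monoidal unit $\mathbf 1=\QCoh(X)_{\geq 0}$. Concretely, since every $M\in\cBr(X)$ is $\otimes$-invertible and every $\Gm$-gerbe is $\star$-invertible, the translation autoequivalences $-\otimes M^{-1}$ of $\cBr(X)$ and $-\star\Phi(M)^{-1}$ of $\Ger_{\Gm}(X)$ are available; using $\Phi(M^{-1})\simeq\Phi(M)^{-1}$ from symmetric monoidality, these intertwine $\Phi$ into a commuting square that identifies the map $\Map_{\cBr(X)}(M,N)\to\Map_{\Ger_{\Gm}(X)}(\Phi M,\Phi N)$ with the map $\Map_{\cBr(X)}(\mathbf 1,N\otimes M^{-1})\to\Map_{\Ger_{\Gm}(X)}(\Phi\mathbf 1,\Phi(N\otimes M^{-1}))$. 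Thus it suffices to treat the case in which the source is the unit.

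Two points then remain, the first of which is formal. For injectivity on connected components I would argue that $\Phi P\simeq\Phi\mathbf 1$ forces $P\simeq\mathbf 1$: by \cref{local-lemma} the object $\Phi\mathbf 1=\Triv_{\geq 0}(\QCoh(X)_{\geq 0})$ is the trivial gerbe $\BGm\times X$, which admits a global section, so if $\Phi P$ is equivalent to it then the space of global equivalences $\QCoh(X)_{\geq 0}\simeq P$ (i.e.\ $\Triv_{\geq 0}(P)(X)$) is nonempty, whence $P\simeq\mathbf 1$ in $\cBr(X)$. Transporting through the translation square gives $\Phi M\simeq\Phi N\Rightarrow\Phi(N\otimes M^{-1})\simeq\Phi\mathbf 1\Rightarrow N\otimes M^{-1}\simeq\mathbf 1\Rightarrow M\simeq N$, so $\Map_{\cBr(X)}(M,N)$ is empty precisely when its target is. It therefore remains only to compare automorphism spaces in the nonempty case.

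The main step is to show that the map of automorphism spaces of the unit
\[
\Aut_{\cBr(X)}\big(\QCoh(X)_{\geq 0}\big)\longrightarrow \Aut_{\Ger_{\Gm}(X)}\big(\BGm\times X\big),
\]
which by the recipe of \cref{triv-gerbe} is given by post-composition $\phi\mapsto(\psi\mapsto\phi\circ\psi)$, is an equivalence. Both sides are naturally identified with $\Map(X,\B\Gm)\simeq\cPic(X)$: on the left by \cref{local-lemma}, since a $\QCoh(-)_{\geq 0}$-linear autoequivalence of $\QCoh(-)_{\geq 0}$ is exactly the datum of a line bundle placed in degree zero; on the right because automorphisms of the trivial $\Gm$-gerbe compatible with the banding are classified by $\Gm$-torsors, i.e.\ line bundles, on $X$. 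The content is that post-composition intertwines these two a priori different descriptions, carrying the autoequivalence ``tensor by $\cL$'' to the gerbe automorphism ``twist by $\cL$''.

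I expect this last identification to be the only genuine obstacle; the rest is Picard-groupoid bookkeeping. I would verify it \'etale-locally on $X$, where by \cref{local-triviality-prestable} and \cref{local-lemma} both automorphism stacks trivialize to explicit $\B\Gm$-data and the post-composition map becomes the identity on $\Gm$-torsors, and then conclude globally by descent, using \cref{stack-of-categories} to guarantee that the relevant automorphism objects are genuine sheaves of categories on $X$. Having shown the map on automorphism spaces of the unit is an equivalence and the map on components is a bijection, the reduction of the first paragraph upgrades this to an equivalence of mapping spaces $\Map_{\cBr(X)}(M,N)\xrightarrow{\sim}\Map_{\Ger_{\Gm}(X)}(\Phi M,\Phi N)$ for all $M,N$, which is exactly full faithfulness.
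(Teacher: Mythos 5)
Your proposal is correct and follows essentially the same route as the paper's proof: reduce to the unit via the Picard-groupoid structure and the symmetric monoidality of $\Triv_{\geq 0}$ (\cref{symm-mon}), dispose of the case of non-equivalent objects by noting that the relevant space of global trivializations would otherwise be nonempty, and identify both automorphism spaces of the unit with $\cPic(X)\simeq\Map(X,\B\Gm)$ via \cref{local-lemma}. The one step you flag as the genuine content --- that post-composition carries ``tensor by $\cL$'' to the corresponding twist of the trivial gerbe --- is exactly the identification the paper also invokes (and states without further elaboration), so your proposed \'etale-local verification is, if anything, slightly more explicit than the published argument.
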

	
	\begin{proof}We want to prove that for any $M,M'\in \cBr(X)$ the map $$\Map_{\cBr(X)}(M,M')\to \Map_{\Ger_{\Gm}(X)}(\Triv_{\geq 0}(M),\Triv_{\geq 0}(M'))$$ is a homotopy equivalence of $\infty$-groupoids.
 First of all, by using the grouplike monoid structure of $\cBr(X)$ together with \cref{symm-mon}, we can reduce to the case $M'=\QCoh(X)_{\geq 0}=\mathbf 1$.
		
		If $M$ does not lie in the connected component of $\mathbf 1$, then what we have to check is that   $$\Map_{\Ger_{\Gm}(X)}(\Triv_{\geq 0}(M),\Triv_{\geq 0}(\mathbf 1))=\varnothing.$$ But if this space contained an object, then in particular we would have an equivalence at the level of global sections between $\textup{Equiv}_{\QCoh(X)_{\geq 0}}(M,\mathbf 1)$ and $\textup{Equiv}_{\QCoh(X)_{\geq 0}}(\mathbf 1,\mathbf 1)$. But the first space is empty by hypothesis, while the second is not.

 On the other hand, if $M$ lies in the connected component of $\mathbf 1$, by functoriality of $\Triv_{\geq 0}(-)$ we can suppose that $M=\mathbf 1$. In this case, we have to prove that the map $$\Map_{\cBr(X)}(\mathbf 1, \mathbf 1)\to \Map_{\Ger_{\Gm}(X)}(\Triv_{\geq 0}(\mathbf 1),\Triv_{\geq 0}(\mathbf 1))\simeq \Map_{\Ger_{\Gm}(X)}(X\times \BGm, X\times \BGm)$$ is an equivalence. But the first space is the groupoid $\Pic(X)$, the latter is the space of maps $X\to \BGm$, and the composite map is the one sending a line bundle over $X$ to the map $X\to \BGm$ that classifies it.
	\end{proof}
	
	\begin{prop}\label{ess-surj} Let $X$ be a qcqs scheme. Then there is a natural equivalence of functors $$\textup{Id}_{\Ger_{\Gm}(X)}\Rightarrow\Triv_{\geq 0}\circ \QCoh_\id(-)_{\geq 0}.$$
	\end{prop}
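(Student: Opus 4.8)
The plan is to produce a natural transformation $\textup{Id}_{\Ger_{\Gm}(X)}\Rightarrow \Triv_{\geq 0}\circ \QCoh_\id(-)_{\geq 0}$ and then to verify pointwise that it is an equivalence of $\Gm$-gerbes. For a fixed $\Gm$-gerbe $G$, write $M_G := \QCoh_\id(G)_{\geq 0}\in \cBr(X)$ (this lies in $\cBr(X)$ by \cref{id-is-invertible}), and recall that $\Triv_{\geq 0}(M_G)$ is the stack $S\mapsto \Equiv_{\QCoh(S)_{\geq 0}}(\QCoh(S)_{\geq 0},\cM_G(S))$, where $\cM_G$ is the sheaf of categories associated to $M_G$ by \cref{stack-of-categories}. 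The first step is to construct, for each $S\to X$, a natural map of spaces
$$
G(S)\longrightarrow \Equiv_{\QCoh(S)_{\geq 0}}\bigl(\QCoh(S)_{\geq 0},\cM_G(S)\bigr).
$$
The idea is that a section $s\in G(S)$ is exactly a trivialization of the restricted gerbe $G_S$, i.e.\ an equivalence $G_S\simeq \BGm\times S$; such a trivialization identifies the category of $\id$-twisted sheaves on $G_S$ with the $\id$-homogeneous part of $\QCoh(\BGm\times S)_{\geq 0}$, which by \cite[Lemma 5.20]{BP} (as used in \cref{rigidification-pullback-theorem}) is canonically equivalent to $\QCoh(S)_{\geq 0}$. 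Unwinding, each local section of $G$ produces a connective $\QCoh(S)_{\geq 0}$-linear equivalence $\QCoh(S)_{\geq 0}\xrightarrow{\sim}\cM_G(S)$, which is the desired element of $\Triv_{\geq 0}(M_G)(S)$. Because this construction is manifestly functorial in $S$ and in $G$, it assembles into the sought natural transformation of functors on $\Ger_{\Gm}(X)$.

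The second step is to check that this map respects the gerbe structure, i.e.\ that it is a morphism of $\Gm$-gerbes. Both $G$ and $\Triv_{\geq 0}(M_G)$ are locally nonempty and connected, so the only real content is compatibility with the bandings. By the very definition of the banding $\alpha_{M_G}$ in \cref{triv-gerbe}, the inertial automorphism $\lambda\in\Gm$ of a section $s\in G(S)$ acts on the corresponding equivalence $\phi_s$ by the horizontal composite $\id_{\phi_s}*\lambda$; on the twisted-sheaf side the inertial action of $\Gm$ is exactly scalar multiplication by the identity character (this is what $\id$-homogeneity means, see \cref{remark-homogeneous}), so the two bandings agree. Since by \cref{gerbes-H2}-style arguments every morphism of $\Gm$-gerbes is automatically invertible, it suffices to prove the map is an equivalence after passing to an \'etale cover.

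The final step is therefore to verify the equivalence \'etale-locally. Choosing a cover $U\to X$ that trivializes $G$, we reduce to $G\simeq \BGm\times U$, for which $M_G\simeq \QCoh(U)_{\geq 0}$ by \cite[Lemma 5.20]{BP} and hence $\Triv_{\geq 0}(M_G)$ is the stack computed in \cref{local-lemma} to be $\BGm\times U$. One then checks that our natural map restricts to the canonical identification $\BGm\times U \xrightarrow{\sim} \BGm\times U$, which is the statement that a trivialization of the trivial gerbe corresponds to the evident autoequivalence of $\QCoh(U)_{\geq 0}$. Since both source and target are stacks and the map is a local equivalence, it is a global equivalence by descent (\cref{stack-of-categories}). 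I expect the main obstacle to be the first step: making the assignment $s\mapsto \phi_s$ genuinely \emph{natural} and coherent at the level of $\infty$-groupoids, rather than just on $\pi_0$, since it requires tracking the homotopy-coherent identification of $\id$-twisted sheaves under change of trivialization. This is precisely where the functorialities packaged in \cref{equalizer} and the compatibility of $\Triv_{\geq 0}(-)$ with the symmetric monoidal structures (\cref{symm-mon}) must be invoked to guarantee that the transformation is defined as a morphism of functors and not merely objectwise.
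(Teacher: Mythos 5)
Your proposal is correct and follows essentially the same route as the paper's proof: identify sections of $G$ over $S$ with trivializations $S\times\BGm\xrightarrow{\sim}G_S$, push forward to get a connective equivalence $\QCoh(S)_{\geq 0}\simeq\QCoh_\id(G_S)_{\geq 0}$, and conclude by checking compatibility with the bandings, using that a morphism of $\Gm$-gerbes is automatically an equivalence. The only differences are cosmetic: the paper constructs the identification $G\simeq\Equiv_{\Ger_{\Gm}}(X\times\BGm,G)$ explicitly via composition with the atlas $u_S$, and once banding compatibility is established it does not need your additional \'etale-local verification.
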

	\begin{proof}
		Let $G$ be a $\Gm$-gerbe over $X$. Let us observe that for any $S\to X$ we have $$G(S)\simeq \Equiv_{\Ger_{\Gm}(S)}(S\times \BGm,G_S).$$
		
		Indeed, there is a map of stacks over $X$
		
		$$ F:\Equiv_{\Ger_{\Gm}}(X\times \BGm, G)\to G$$ 
		$$(S\to X,\phi_S:S\times \BGm\to G_S)\mapsto (S\to X,\phi\circ u_S)$$
  where $u_S:S\to  S\times \BGm$ is induced by the canonical atlas of $\BGm$. Up to passing to a suitable \'etale covering of $X$, the map becomes an equivalence: in fact the choice of an equivalence $\phi:S\times \BGm\to S\times\BGm$ \textit{of gerbes over $S$} amounts to the choice of a map $S\to \BGm$, because $\phi$ must be a map over $S$ (hence $\pr_{\BGm}\circ\phi=\pr_{\BGm}$) and it must respect the banding. Therefore, $F$ is an equivalence over $X$, and this endows $\Equiv_{\Ger_{\Gm}}(X\times \BGm,G)$ with a natural structure of a $\Gm$-gerbe over $X$.
		The construction $\phi\mapsto \phi_*$ thus provides a morphism of stacks $$G\to \Equiv_{\cQCoh(X)}\Big(\cQCoh_\id(X\times \BGm),\cQCoh_\id(G)\Big).$$ 
		 
		Now since the pushforward $\phi_*$ of an equivalence $\phi$ of stacks is t-exact, the construction $\phi\mapsto \phi_*|_{\cQCoh_{\id}(X\times \BGm)_{\geq 0}}$ yields a map $$G\to \Equiv_{\cQCoh(X)_{\geq 0}}\Big(\cQCoh_\id(X\times \BGm)_{\geq 0},\cQCoh_\id(G)_{\geq 0}\Big).$$ and now the right-hand-side is in turn equivalent to
		$$G\to \Equiv_{\cQCoh(X)_{\geq 0}}\Big(\cQCoh(X)_{\geq 0},\cQCoh_\id(G)_{\geq 0} \Big)=\Triv_{\geq 0}(\QCoh_\id(G)_{\geq 0}).$$
		
		But now, $\Triv_{\geq 0}(\QCoh_\id(G))$ is a $\Gm$-gerbe over $X$ (note that this was not true before passing to the connective setting), and therefore to prove that the map is an equivalence it suffices to prove that it agrees with the bandings, i.e. that it is a map of $\Gm$-gerbes. This follows from unwinding the definitions.
\end{proof}

	We are now ready to prove our main result.
	
	\
	
	\textit{Proof of \cref{main-theorem}}: \cref{QCoh-id-cn-is-invertible} and \cref{Triv-is-gerbe}  tell us that the two functors are symmetric monoidal and take values in the sought $\infty$-categories. The fact that they form an equivalence follows from \cref{Triv-is-ff} and \cref{ess-surj}.\qed

	\bibliographystyle{alpha}
	\bibliography{Azumaya.bib}

\begin{thebibliography}{AOV08}

\bibitem[AG14]{Antieau-Gepner}
Benjamin Antieau and David Gepner.
\newblock {Brauer groups and \'etale cohomology in derived algebraic geometry}.
\newblock {\em Geometry and Topology 18}, 2014.

\bibitem[AOV08]{AOV}
Dan Abramovich, Martin Olsson, and Angelo Vistoli.
\newblock Tame stacks in positive characteristic.
\newblock {\em Annales de l'Institut Fourier}, 58(4):1057--1091, 2008.

\bibitem[Bea15]{MO-opposite-operad}
Johnatan Beardsley.
\newblock Opposite symmetric monoidal structure on an infinity category.
\newblock \url{https://mathoverflow.net/q/191739}, 2015.

\bibitem[BP21]{BP}
Federico Binda and Mauro Porta.
\newblock {Descent problems for derived Azumaya algebras}.
\newblock \url{https://arxiv.org/pdf/2107.03914c}, 2021.

\bibitem[BS19]{Bergh-Schnurer}
Daniel Bergh and Olaf Schnürer.
\newblock {Decompositions of derived categories of gerbes and of families of
  Brauer-Severi varieties}, 2019.

\bibitem[DJ]{DeJong-Gabber}
Johan~A. De~Jong.
\newblock {A result of Gabber}.
\newblock \url{https://www.math.columbia.edu/~dejong/papers/2-gabber.pdf}.

\bibitem[DJ04]{DeJong}
Johan~A. De~Jong.
\newblock {The period-index problem for the Brauer group of an algebraic
  surface }.
\newblock {\em Duke Math. J. 123 (1) 71 - 94}, 2004.

\bibitem[Gai15]{Gaitsgory-affineness}
Dennis Gaitsgory.
\newblock {Sheaves of categories and the notion of 1-affineness}.
\newblock {\em Stacks and categories in geometry, topology, and algebra, volume
  643 of Contemp. Math., pages 127–225. Amer. Math. Soc., Providence, RI},
  2015.

\bibitem[Gir71]{Gir}
Jean Giraud.
\newblock {\em {Cohomologie non ab\'elienne}}.
\newblock Springer-Verlag, 1971.
\newblock Die Grundlehren der mathematischen Wissenschaften, Band 179.

\bibitem[Gro66]{Grothendieck-Brauer-I}
Alexander Grothendieck.
\newblock {Le groupe de Brauer : I. Alg\`ebres d’Azumaya et interpr\'etations
  diverses}.
\newblock {\em Séminaire N. Bourbaki, 1966, exp. no 290}, 1966.

\bibitem[HR17]{Hall-Rydh}
Jack Hall and David Rydh.
\newblock Perfect complexes on algebraic stacks.
\newblock {\em Compositio Mathematica}, 153(11):2318--2367, 2017.

\bibitem[Lie08]{Lieblich}
Max Lieblich.
\newblock Twisted sheaves and the period-index problem.
\newblock {\em Compositio Mathematica}, 144(1):1–31, 2008.

\bibitem[Lur11]{DAG-XI}
Jacob Lurie.
\newblock {DAG XI: Descent Theorems}.
\newblock \url{https://people.math.harvard.edu/~lurie/papers/DAG-XI.pdf}, 2011.

\bibitem[Lur17]{HA}
Jacob Lurie.
\newblock {Higher Algebra}.
\newblock \url{http://people.math.harvard.edu/~lurie/papers/HA.pdf}, 2017.

\bibitem[Lur18]{SAG}
Jacob Lurie.
\newblock {Spectral Algebraic Geometry}.
\newblock \url{https://www.math.ias.edu/~lurie/papers/SAG-rootfile.pdf}, 2018.

\bibitem[Ols16]{Olsson}
Martin Olsson.
\newblock {\em {Algebraic Spaces and Stacks}}.
\newblock AMS, 2016.

\bibitem[SPA]{Stacks}
The Stacks Project~Authors.
\newblock {The Stacks Project}.
\newblock \url{https://stacks.math.columbia.edu/}.

\bibitem[To{\"e}10]{Toen-Azumaya}
Bertrand To{\"e}n.
\newblock {Derived Azumaya algebras and generators for twisted derived
  categories}.
\newblock {\em Inventiones mathematicae 189(3)}, 2010.

\end{thebibliography}
	
\end{document}